\documentclass[11pt,a4paper,oneside,onecolumn]{article}
\usepackage{graphicx,amssymb, amsmath,theorem, calc}
\usepackage{colortbl}
\usepackage{multicol}
\usepackage{caption}
\usepackage{lineno}
\usepackage{subcaption}
\usepackage{yhmath}
\usepackage{hyperref}
\usepackage{xspace}
\usepackage{color}

%
%\usepackage{./algorithm}
%\usepackage{./algorithmic}
%

%%%%%%%%%%%%%%%%%%%%%%%%%%%%%
\setlength{\parindent}{0cm}
\setlength{\parskip}{0.15cm}

\usepackage[letterpaper]{geometry}
\geometry{top=2.5cm, bottom=2.5cm, left=2.5cm, right=2.5cm}
%%%%%%%%%%%%%%%%%%%%%%%%%%%%%

%%%%%%%%%%%%%%%%%%%%%%%%%%%%%%
\newtheorem{theorem}{Theorem}[section]

\newtheorem{corollary}[theorem]{Corollary}

\newtheorem{lemma}[theorem]{Lemma}

\newtheorem{problem}[theorem]{Problem}

\newtheorem{remark}[theorem]{Remark}

%%%%%%%%%%%%%%%%%%%%%%%%%%%%%%

\def\QED{\ensuremath{{\square}}}
\def\markatright#1{\leavevmode\unskip\nobreak\quad\hspace*{\fill}{#1}}

\newenvironment{proof}
{\begin{trivlist}\item[\hskip\labelsep{\bf Proof.}]}
{\markatright{\QED}\end{trivlist}}

%RUY=============
%\documentclass{amsart}
%
%\usepackage{amsmath,graphicx,amsfonts,lineno}
%\usepackage[ansinew]{inputenc}
%\usepackage{fancyhdr}
%
%
%\newtheorem{theorem}{Theorem}
\newtheorem{theorem*}{Theorem}
%\newtheorem{corollary}[theorem]{Corollary}
%\newtheorem{lemma}[theorem]{Lemma}
%\newtheorem{proposition}[theorem]{Proposition}
%\newtheorem{problem}[theorem]{Problem}
%\newtheorem{definition}[theorem]{Definition}
%\newtheorem{conjecture}[theorem]{Conjecture}
%\newtheorem{problem}[theorem]{Problem}
%%% You may enlarge this list if necessary
%
%\def\QED{\ensuremath{{\square}}}
%\def\markatright#1{\leavevmode\unskip\nobreak\quad\hspace*{\fill}{#1}}
%\newenvironment{proof}
 %{\begin{trivlist}\item[\hskip\labelsep{\bf Proof.}]}
% {\markatright{\QED}\end{trivlist}}

%%
%% Here you may place your macros using \newcommand{}{}
%%

\newcommand{\etal}{\emph{et al.}\xspace}

\newcommand{\arc}[1]{\stackrel{\text{\resizebox{\widthof{$#1$}}{2pt}{$\frown$}}}{#1}}
\newcommand{\seq}[2]{\underbrace{#1,\enspace{\scalebox{1.5}{$\cdots$}}\enspace, #1}_{#2}}

\begin{document}

%\linenumbers

\title{Asymmetric polygons with maximum area}
 
%or Maximal area polygons satisfying the ``rhythmic oddity property''

\author{L. Barba\thanks{Universit\'e Libre de Bruxelles (ULB), Brussels, Belgium. {\tt lbarbafl@ulb.ac.be}} \and
 L.E. Caraballo\thanks{Departamento de Matem\'{a}tica
Aplicada II, Universidad de Sevilla, Spain. {\tt luisevaristocaraballo-ext@us.es}} \and
J. M. D\'{\i}az-B\'{a}\~{n}ez\thanks{Departamento de Matem\'{a}tica
Aplicada II, Universidad de Sevilla, Spain. Partially supported by
projects FEDER P09-TIC-4840 (Junta de Andaluc\'{\i}a). {\tt dbanez@us.es.}} \and
R.~Fabila-Monroy\thanks{Departamento de
Matem\'aticas. Centro de Investigaci\'on y de Estudios Avanzados del Instituto Polit\'ecnico Nacional, Mexico City, Mexico. Partially supported by Conacyt of Mexico, grant 153984.
 {\tt ruyfabila@math.cinvestav.edu.mx}}\and
E. P\'erez-Castillo\thanks{Departamento de Matem\'{a}tica
Aplicada II, Universidad de Sevilla, Spain. {\tt  edepercas@alum.us.es}}
}

\date{\today}

\maketitle

\begin{abstract}
We say that a polygon inscribed in the circle is asymmetric if it contains no two antipodal points being the endpoints of a diameter. Given $n$ diameters of a circle and a positive integer $k<n$, this paper addresses the problem of computing a maximum area asymmetric $k$-gon having as vertices $k<n$ endpoints of the given diameters. The study of this type of polygons is motivated by ethnomusiciological applications.
\end{abstract}

\noindent \textbf{Keywords:} Optimization; Combinatorial problems;  Musical rhythms; Algorithms.
%Computational music theory.
%OR applications in other domain;

\section{Introduction}

Imagine that we are at a concert of salsa and we want to retain the intrinsic rhythmic pattern to dance appropriately. Then we should know the clave Son.
The clave Son rhythm might be represented as the 16-bit binary sequence $1001001000101000$ or,
as usual for cyclic rhythms, by onsets represented as points on a circle as in Figure \ref{fig:buleson} c). In this geometric setting, the clave Son is associated to a pentagon whose vertices are selected on a circular lattice of 16 points. So, the clave Son is represented by a special selection of $k$ points on the circular lattice. 
This pattern has conquered our planet during the last half of the 20th century. But, what properties does this particular selection have?
Notice that the polygon that represents the clave Son does not contain two antipodal points on the circle and moreover, it is easy to prove that this configuration is just the pentagon of maximum area without antipodal vertices (this later property produces a certain kind of asymmetry). 
Musicians have showed interest in finding similar patterns. Ethnomusicology is the discipline encompassing various approaches to the study of music that emphasize its cultural context. More specifically, 
Ethnomathematics is a domain consisting of the study of mathematical ideas shared by orally transmitted cultures. Such ideas are related to numeric, logic and spatial configurations \cite{ascher1998,chemillier2002}.
%M. R. Ascher: ‘Ethnomathematics’, History of Science, xxiv (1986) pp. 125–144
%M. Ascher: Ethnomathematics. A multicultural view of mathematical ideas (Chapman & Hall, New-York, 1991) (French translation by K. Chemla, S. Pa-haut, Mathématiques d’ailleurs, Seuil, Paris, 1998)
%M. Chemillier
%Ethnomusicology, ethnomathematics. The logic underlying orally transmitted artistic practices
%G. Assayag, H.G. Feichtinger, J.F. Rodrigues (Eds.), Mathematics and Music, Diderot Forum, European Mathematical Society, Springer, Berlin (2002), pp. 161–183.
%
Related to spacial configurations, the area is a useful measure of evenness of scales and rhythms in music theory \cite{rappaport, arom}. 
Moreover, many musical traditions all over the world have asymmetric
rhythmic patterns. For instance, the Aka Pygmies 
rhythmic pattern in Figure \ref{fig:buleson} b) has the so-called rhythmic oddity property discovered by Simha Arom \cite{arom}. A rhythm has the \emph{rhythmic oddity} property if when represented on a circle it does not contain two onsets (the black points in the Figure \ref{fig:buleson}) that lie diametrically opposite to each other. Thus, the property asserts that one cannot break
the circle into two parts of equal length, whatever the chosen
breaking point, as there is always one unit lacking on one side. This property produces a kind of perceptual asymmetry. The asymmetry of the pattern is to some extent
intrinsic, in the sense that there exists no breaking point
giving two parts of equal length. Note that the oddity
property requires that the circle is divided into an even
number of units. 
The notion of rhythmic oddity has received different
mathematical treatments; see \cite{toussaint} for more details.

An algorithm
for enumerating all the patterns satisfying the rhythmic
oddity property has been proposed in \cite{chemillier}.
Asymmetric rhythmics can also be found in the flamenco music of Spain (Figure \ref{fig:buleson} a)) and the \emph{clave Son}  in Cuba (Figure \ref{fig:buleson} c)). See \cite{diaz2004compas,diaz2013}, and \cite{toussaint2005}, for a detailed study on the preference of theses rhythms in their cultural contexts.

 \begin{figure}{th}
\begin{center}
 \includegraphics[width=.8\textwidth]{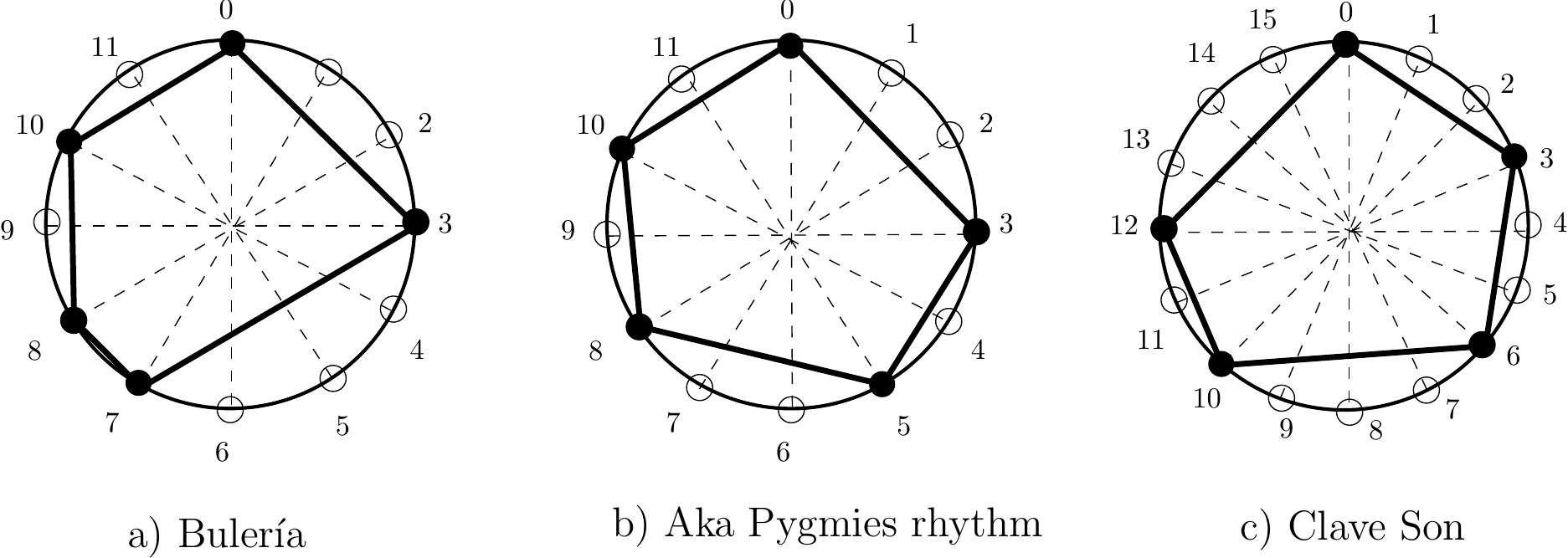}
 \caption{a) A flamenco
rhythm in Spain, b) a rhythm used in Central Africa, c) the clave son in Cuba.}\label{fig:buleson}
\end{center}
\end{figure}

Inspired in these ideas, we introduce the following geometric problem: 

\begin{problem}\label{problem: Main problem}
Given $n$ diameters in a circle and a positive integer $k<n$, select $k$ endpoints of these diameters, no two of the same diameter, in such a way that their convex hull defines a $k$-gon of maximum area.
\end{problem}

Let us introduce some notation and related work. Let $(p_0, p'_0), \ldots, (p_{n-1}, p_{n-1}')$ be $n$ diameters of a given circle.
Let $S:=\{p_0,p_0',p_1,p_1',\dots,p_{n-1},p_{n-1}'\}$ be an \emph{antipodal set} containing the $2n$ endpoints of the given diameters.
A \emph{sub-polygon} (of $S$) is a convex polygon whose vertex set is a subset of $S$.
An \emph{antipodal polygon} (on $S$)
is a sub-polygon whose vertex set contains exactly one endpoint
from each diameter $(p_i,p_i')$ of $S$~\cite{aichholzer2013}. 
An \emph{asymmetric polygon} is a sub-polygon that contains no diameter. 
Therefore, an antipodal polygon is also an asymmetric polygon.
%A $k$-gon with $k<n$ vertices in $S$ can also be called antipodal if it is asymmetric. 
Aichholzer \etal proved that an antipodal polygon of maximum area can be found in $\Theta(n)$-time~\cite{aichholzer2013}. 
The linear time algorithms they proposed are strongly based on a simple characterization for the extremal antipodal polygons. Namely, that an antipodal polygon of maximum area has an alternating configuration~\cite{aichholzer2013}. 

The problem however, is significantly different if we ask for an asymmetric $k$-gon of maximum area with $k<n$ vertices in $S$. It is not difficult to come up with examples for which the simple characterization stated above does not hold if $k<n$. 
Aichholzer \etal presented an $O(n^{n-k+1})$-time algorithm to compute an antipodal $k$-gon of maximum area. However, the existence of a polynomial time algorithm to solve this problem was left as an open question~\cite{aichholzer2013}.

In this paper, we answer this question in the affirmative. We distinguish two cases: If we are given a circular lattice with an antipodal set of $2n$ points (induced by $n$ evenly spaced diameters), we show how to solve Problem~\ref{problem: Main problem} in constant time by providing a characterization of the solution. Otherwise, if the diameters are given in a general configuration, we show that the problem can be solved in $O(kn^4)$-time using dynamic programming.

The problem studied here is related to other optimization problems in mathematics. In computational geometry, efficient algorithms have been proposed for computing extremal polygons with respect to several different properties~\cite{boyce}. Moreover,  the so-called stabbing or transversal problems (see for instance \cite{arkin}) belong to the same family that our problem. Recently, it has been proved that the following problem is NP-hard \cite{pilz}: 
 Given a set $S$ of line segments, compute the minimum or maximum area (perimeter) polygon $P$ such that $P$ stabs $S$, that is, at least one of the two endpoints of every segment $s\in S$ is contained in $P$.
 In operations research, global optimization techniques have been extensively studied
 to find convex polygons maximizing a given parameter~\cite{hansen}. Now, let us consider a different interpretation of the problem, let $P$ be the convex hull of the given diameters. In this case, the solution to Problem~\ref{problem: Main problem} can be
interpreted as the asymmetric polygon with $k<n$ vertices that has its area closest to that of $P$. 
Therefore, this problem is related to the approximation of convex sets.  In this setting,  the best inner approximation of any convex set by a symmetric set is studied in
\cite{lassak2002}. Moreover,
if we consider the ``distance'' to a symmetric set to be a measure of its symmetry \cite{grunbaum}, then our solution to Problem~\ref{problem: Main problem} provides the best approximation of a convex polygon inscribed in a circle by asymmetric sub-polygons.

Finally, it is worth mentioning that although there exists a high number
of applications of mathematics to music theory, the
research in music has illuminated problems that are appealing,
nontrivial, and, in some cases, connected to deep mathematical
questions. The problem introduced in this paper could be an example.

The remainder of the paper is organized as follows. In Section~\ref{sec:Evenly spaced} we consider the constrained version of Problem~\ref{problem: Main problem} in which the endpoints of the diameters are evenly spaced on the circle. In this case, we provide a characterization of the maximum $k$-gon which yields a constant time algorithm to solve Problem~\ref{problem: Main problem}.
The general version of this problem where the endpoints of the diameters are distributed anywhere on the circle is studied in Section~\ref{sec:General Case}. In this case, we show that Problem~\ref{problem: Main problem} can be solved in polynomial time using dynamic programming.

\section{Evenly spaced diameters}\label{sec:Evenly spaced}

In this section, we assume that $S=\{p_0,p_0',p_1,p_1',\dots,p_{n-1},p_{n-1}'\}$ is the set of endpoints of $n$ evenly spaced diameters on a unit circle. That is, $S$ partition the circle into $2n$ arcs of equal length. A $k$-gon $Q_k$ with vertices $q_{0},\ldots, q_{k-1}$ in $S$ can be encoded by a sequence $<a_{0}, ..., a_{k-1}>$, where $a_i$ is the number of arcs between the vertices $q_i$ and $q_{i+1}$, $0\leq i < k$, $q_{k}=q_{0}$. For example, the clave Son in Figure \ref{fig:buleson} c) can be encoded by the sequence $<3,3,4,2,4>$. In music theory, this sequence is called the interval vector (or full-interval vector) \cite{mccartin}. As with rhythmic patters, we assume that the sequence $<a_{0},\ldots, a_{k-1}>$ is cyclic, i.e., for $i\geq k$, $a_i=a_{i\bmod k}$. Throughout this paper, we identify a sub-polygon with $k$ vertices by its corresponding encoding sequence.

\begin{remark} Let $Q_k$ be a sub-polygon encoded by the sequence $<a_0,\dots, a_{k-1}>$. The following properties hold:
\begin{enumerate}
\item The area of the polygon $Q_k$ is $\dfrac{1}{2}\sum^{k-1}_{i=0}\sin(\frac{\pi }{n} a_{i})$.
\item The area is invariant under permutations of the sequence $<a_{0},\dots, a_{k-1}>$. That is, for any $0\leq i < j \leq k-1$, the area of the polygons given by the sequences $<a_{0},\ldots, a_i,\ldots, a_j,\ldots, a_{k-1}>$ and $<a_{0},\ldots, a_j,\ldots, a_i,\ldots, a_{k-1}>$ is the same. 
\item Polygon $Q_k$ is asymmetric if and only if there are no two indices $i$, $j$ such that $i\not=j$ and $a_{i}+a_{i+1}+\dots+a_{j}=n$.
\end{enumerate}
\end{remark}

The following technical results will be used later.

\begin{lemma}\label{lemma_swap}
Let $a$, $b$, $n$ be positive integers with $a-b\geq 2$ and $2n>a+b$. Then we have $$\sin\left(\frac{\pi a}{n}\right)+\sin\left(\frac{\pi b}{n}\right) < \sin\left (\frac{\pi (a-1)}{n}\right)+\sin\left(\frac{\pi (b+1)}{n}\right)$$
\end{lemma}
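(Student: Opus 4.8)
The plan is to use the sum-to-product identity $\sin X + \sin Y = 2\sin\!\left(\frac{X+Y}{2}\right)\cos\!\left(\frac{X-Y}{2}\right)$ on both sides of the claimed inequality. The crucial observation is that the two sums of arguments agree: for the left-hand side the half-sum is $\frac{\pi(a+b)}{2n}$ with half-difference $\frac{\pi(a-b)}{2n}$, while for the right-hand side, since $(a-1)+(b+1)=a+b$, the half-sum is again $\frac{\pi(a+b)}{2n}$ and the half-difference becomes $\frac{\pi(a-b-2)}{2n}$. Hence the inequality to prove takes the form
\begin{equation*}
2\sin\!\left(\frac{\pi(a+b)}{2n}\right)\cos\!\left(\frac{\pi(a-b)}{2n}\right) < 2\sin\!\left(\frac{\pi(a+b)}{2n}\right)\cos\!\left(\frac{\pi(a-b-2)}{2n}\right),
\end{equation*}
so the common sine factor can be cancelled and everything reduces to a comparison of two cosines.

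Next I would check the sign of the common factor so that cancellation preserves the direction of the inequality. Since $a,b$ are positive integers with $a-b\geq 2$, we have $a+b\geq 4>0$, and the hypothesis $2n>a+b$ gives $\frac{\pi(a+b)}{2n}\in(0,\pi)$; therefore $\sin\!\left(\frac{\pi(a+b)}{2n}\right)>0$. Dividing through by the strictly positive quantity $2\sin\!\left(\frac{\pi(a+b)}{2n}\right)$, the lemma is equivalent to
\begin{equation*}
\cos\!\left(\frac{\pi(a-b)}{2n}\right) < \cos\!\left(\frac{\pi(a-b-2)}{2n}\right).
\end{equation*}

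Finally I would invoke the strict monotonicity of $\cos$ on $[0,\pi]$, which requires locating both arguments in that interval. From $a-b\geq 2$ we get $a-b-2\geq 0$, so the smaller argument is nonnegative; and since $b\geq 1$ implies $a-b<a+b<2n$, we get $\frac{\pi(a-b)}{2n}<\pi$, so the larger argument stays below $\pi$. As $a-b-2<a-b$, the two arguments satisfy $0\le \frac{\pi(a-b-2)}{2n} < \frac{\pi(a-b)}{2n} < \pi$, and cosine being strictly decreasing on $[0,\pi]$ yields the desired strict inequality. I expect the only delicate point to be exactly this bookkeeping of ranges: one must confirm that the half-sum lands in $(0,\pi)$ to guarantee a positive sine factor, and that both half-differences land in $[0,\pi)$ to apply monotonicity of cosine, both of which follow cleanly from the hypotheses $a-b\geq 2$ and $2n>a+b$ together with $a,b\geq 1$.
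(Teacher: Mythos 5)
Your proof is correct and follows essentially the same route as the paper's: both apply the sum-to-product identity $\sin X+\sin Y=2\sin\bigl(\tfrac{X+Y}{2}\bigr)\cos\bigl(\tfrac{X-Y}{2}\bigr)$ to each side, cancel the common positive factor $2\sin\bigl(\tfrac{\pi(a+b)}{2n}\bigr)$, and conclude by the strict monotonicity of cosine on the relevant interval. Your write-up is in fact slightly more careful than the paper's, since you explicitly justify the positivity of the sine factor and the placement of both half-differences in $[0,\pi)$, details the paper states without elaboration.
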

\begin{proof}
The previous inequality can be written as:
\begin{displaymath}
2\sin\left(\frac{\pi (a+b)}{2n}\right)\cos\left(\frac{\pi (a-b)}{2n}\right)< 2\sin\left(\frac{\pi (a+b)}{2n}\right)\cos\left(\frac{\pi (a-b-2)}{2n}\right)\ .
\end{displaymath}
Because $0 < \frac{\pi (a+b)}{2n}< \pi$, $0\leq \frac{\pi (a-b-2)}{2n} < \frac{\pi (a-b)}{2n} < \pi $ and from the fact that $\cos(\cdot)$ is decreasing in $(0,\pi)$, this inequality holds.
\end{proof}

For ease of notation, a sequence $a,\dots,a$ of $c$ elements all equal to $a$ is denoted by \;\raisebox{8pt}{$\underbrace{a,\dots,a}_{c}$}. The following lemma characterizes the $k$-gons (not necessarily asymmetric) of maximum area.

\begin{lemma}\label{lemma_max}
Given a regular polygon of $m$ vertices inscribed in a circle, the maximum area sub-polygon with $k\leq m$ vertices is encoded by any permutation of the following sequence %$$<a_{1}=a_{2}=\ldots=a_{r}=q+1, a_{r+1}=\ldots=a_{k}=q,$$ 
$$<\seq{q+1}{r},\:\seq{q}{k-r}>\ ,$$
where $m=kq+r$, $0\leq r < k$.
\end{lemma}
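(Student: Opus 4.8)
The plan is to recast the problem as a purely combinatorial optimization over the gap sequence and then run a smoothing (exchange) argument driven by Lemma~\ref{lemma_swap}. A sub-polygon with $k$ vertices of the regular $m$-gon is determined, up to rotation, by its gap sequence $\langle a_0,\dots,a_{k-1}\rangle$, where each $a_i$ is a positive integer and $\sum_{i=0}^{k-1} a_i = m$; conversely, every composition of $m$ into $k$ positive parts arises from such a sub-polygon. As in the Remark, its area equals $\tfrac{1}{2}\sum_{i=0}^{k-1}\sin\big(\tfrac{2\pi a_i}{m}\big)$ and depends only on the multiset $\{a_0,\dots,a_{k-1}\}$ (this expression is the correct signed area of the convex hull, so it stays valid even if some gap exceeds $m/2$). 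Hence it suffices to show that, among all multisets of $k$ positive integers summing to $m$, the unique maximizer of $\sum_i \sin(\tfrac{2\pi a_i}{m})$ is the balanced multiset consisting of $r$ copies of $q+1$ and $k-r$ copies of $q$, where $m = kq + r$ with $0 \le r < k$; the statement then follows from the permutation invariance recorded in the Remark.

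First I would dispose of the degenerate case $k=2$ directly: every $2$-gon has gaps $a$ and $m-a$, so its area is $\tfrac{1}{2}(\sin(\tfrac{2\pi a}{m}) + \sin(2\pi - \tfrac{2\pi a}{m})) = 0$, and the claimed sequence is (weakly) optimal. For $k\ge 3$ I would argue as follows. Suppose a multiset is not balanced, i.e., it contains two parts $a:=a_i$ and $b:=a_j$ with $a-b\ge 2$. Since $k\ge 3$, the remaining $k-2\ge 1$ parts are positive, so $a+b \le m-(k-2) < m$. Applying Lemma~\ref{lemma_swap} with its parameter $n$ taken to be $m/2$ (its proof uses only that the denominator is a positive real, so half-integer values are admissible) yields $\sin(\tfrac{2\pi a}{m}) + \sin(\tfrac{2\pi b}{m}) < \sin(\tfrac{2\pi (a-1)}{m}) + \sin(\tfrac{2\pi (b+1)}{m})$. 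Thus replacing $(a,b)$ by $(a-1,b+1)$ keeps all parts positive, preserves both the total $m$ and the number of parts $k$, and strictly increases the area.

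Because there are only finitely many compositions of $m$ into $k$ positive parts, this strictly improving process must terminate; alternatively, the potential $\sum_i a_i^2$ strictly decreases at each step, since $(a-1)^2 + (b+1)^2 = a^2 + b^2 - 2(a-b-1) < a^2 + b^2$. The terminal multiset admits no two parts differing by at least $2$, so all its parts lie in $\{q,q+1\}$ with $q$ its minimum value; writing $r$ for the number of parts equal to $q+1$ gives $m = (k-r)q + r(q+1) = kq+r$, whence $q = \lfloor m/k\rfloor$ and $r = m \bmod k$, exactly the balanced multiset in the statement. Since every non-balanced multiset is strictly beaten, this balanced multiset is the unique maximizer, and by permutation invariance any of its orderings encodes a maximum-area sub-polygon. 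I expect the only delicate points to be the bookkeeping that matches the $m$-gon area constant to the denominator required by Lemma~\ref{lemma_swap}, and the verification that the swap hypothesis $a+b<m$ holds throughout; for $k\ge 3$ both reduce to the elementary fact that the untouched parts are positive.
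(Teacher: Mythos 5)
Your proof is correct and follows essentially the same route as the paper's: the identical exchange step justified by Lemma~\ref{lemma_swap} (including the same verification that the two swapped parts sum to less than $m$ because $k\ge 3$), followed by the same counting argument identifying the balanced multiset of $r$ copies of $q+1$ and $k-r$ copies of $q$. The differences are only presentational---you run the smoothing to termination via a potential function rather than deriving a contradiction at a maximizer, and you explicitly justify applying Lemma~\ref{lemma_swap} with the half-integer parameter $n=m/2$ when $m$ is odd, a detail the paper leaves implicit.
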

\begin{proof}
Let $a_0,a_1,\ldots,a_{k-1}$ be a sequence encoding a sub-polygon $Q_k$ with maximum area. 
Define $a_{max} = \max\left(a_0,a_1,..., a_{k-1}\right)$ and $a_{min} = \min\left(a_0,a_1,...,a_{k-1}\right)$. 
If $a_{max} - a_{min} \geq 2$, then we switch $a_{max}$ to $a_{max}-1$, and $a_{min}$ to $a_{min}+1$. In this way, because $k\geq 3$ we have $m=\sum^{k-1}_{i=0}a_i>a_{max}+a_{min}$ and hence, Lemma \ref{lemma_swap} implies that the new polygon has larger area than the initial one leading to a contradiction. Therefore, $a_{max} - a_{min} \leq 1$. This implies that for some integer $e$ and for every $0\leq i\leq k-1$, either $a_i=e$ or $a_i=e+1$. 

Let $u$ and $v$ be the number of occurrences of the integers $e$ and $e+1$ respectively, that is, $u(e)+v(e+1)=m$. Thus, we have $(u+v)e+v=kq+r$ and $ke+v=kq+r$. Consequently, $v-r$ is a multiple of $k$. Since $v\leq k$ and $r<k$, we have only two cases: either $v=r$, or $v=k$ and $r=0$. In the former case, $e=q$ and in the later, $a_i=e+1=q$ for every $i$. The result follows. 
\end{proof}

\subsection{Case $k$ odd}

In this section, we show that if the number of vertices $k$ is odd, then there is a permutation of the sequence of 
Lemma \ref{lemma_max} such that its corresponding sub-polygon is asymmetric.

\begin{theorem}\label{th_odd} 
Given a regular polygon of $2n$ vertices inscribed in a circle and positive integers $k,q,r$ with $2n=kq+r$, $0\leq r < k$, the maximum area asymmetric sub-polygon with $k$ vertices is given by the following sequences:

For $r$ even, 
$$<\seq{q+1}{r/2},\;\seq{q}{(k-r-1)/2},\;\seq{q+1}{r/2},\;\seq{q}{(k-r+1)/2}>$$
%\begin{displaymath}a_{1}=a_{2}=...=a_{r/2}=q+1\end{displaymath}
%\begin{displaymath} a_{1+r/2}=...=a_{(k-1)/2}=q\end{displaymath}
%\begin{displaymath}a_{1+(k-1)/2}=...=a_{r/2+(k-1)/2}=q+1\end{displaymath}
%\begin{displaymath}a_{1+r/2+(k-1)/2}=...=a_k=q\end{displaymath}

For $r$ odd,
$$<\seq{q}{(k-r)/2},\;\seq{q+1}{(r-1)/2},\;\seq{q}{(k-r)/2},\;\seq{q+1}{(r+1)/2}>$$
%\begin{displaymath}a_{1}=a_{2}=...=a_{(k-r)/2}=q\end{displaymath}
%\begin{displaymath} a_{1+(k-r)/2}=...=a_{(k-1)/2}=q+1\end{displaymath}
%\begin{displaymath}a_{1+(k-1)/2}=...=a_{(k-r)/2+(k-1)/2}=q\end{displaymath}
%\begin{displaymath}a_{1+(k-r)/2+(k-1)/2}=...=a_k=q+1\end{displaymath}

\end{theorem}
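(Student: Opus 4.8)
The plan is to reduce the theorem to a purely combinatorial statement about the arrangement and then verify it by a sliding-window analysis.

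First I would note that each displayed sequence is a permutation of $<\seq{q+1}{r},\seq{q}{k-r}>$. Indeed, for $r$ even the two $(q{+}1)$-blocks contribute $r/2+r/2=r$ copies and the two $q$-blocks contribute $(k-r-1)/2+(k-r+1)/2=k-r$ copies (and symmetrically for $r$ odd); the new block lengths are nonnegative integers precisely because $k$ is odd. By Lemma~\ref{lemma_max} (with $m=2n$) this multiset encodes a $k$-gon of maximum area among all $k$-gons, and by part~(2) of the Remark the area is permutation-invariant, so each displayed sequence attains that global maximum. Since every asymmetric $k$-gon has area at most this value, it suffices to prove that the displayed sequences are asymmetric, i.e.\ to verify the condition of part~(3) of the Remark.

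To do this I would pass to partial sums. Writing $a_i=q+b_i$ with $b_i\in\{0,1\}$, the $k$ vertices sit at positions $s_i=iq+B_i$ on the circle of $2n$ arc-units, where $B_i$ counts the $q{+}1$'s among the first $i$ terms. A block of $t$ consecutive terms containing $p$ of the $q{+}1$'s has arc-sum $tq+p$, so by the Remark the polygon is asymmetric iff $tq+p\neq n$ for every such block. Since sliding a window by one position changes $p$ by at most one, for each length $t$ the attainable values of $p$ form a contiguous interval $[m(t),M(t)]$, and the whole condition becomes $g(t):=n-tq\notin[m(t),M(t)]$ for every $1\le t\le k-1$.

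Next I would compute $m(t)$ and $M(t)$ from the run structure. Both arrangements consist of two runs of $q{+}1$'s and two runs of $q$'s, and the parity obstruction is exactly that $k$ is odd: the unavoidable imbalance is concentrated as a single extra unit in one run, so for $r$ even the two $q$-runs have lengths $(k-r-1)/2$ and $(k-r+1)/2$, while for $r$ odd the two $(q{+}1)$-runs have lengths $(r-1)/2$ and $(r+1)/2$. A covering argument then makes $m$ and $M$ piecewise linear with slopes in $\{0,1\}$; the relevant consequence is that near the centre $t=k/2$ the band $[m(t),M(t)]$ has width one and sits at height about $r/2$. The escape now follows: because $k<n$ forces $q\ge2$, the line $g(t)=n-tq$ falls with slope $-q\le-2$ while the band rises by at most one per unit step, and because $k$ is odd the centre $t=k/2$ is a half-integer, so no integer length lands there. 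Evaluating at the two integer lengths flanking the centre gives $g=(r\pm q)/2$ (for $r$ odd), resp.\ $g=r/2\pm q/2$ (for $r$ even), each lying strictly outside the width-one band about $r/2$ by at least $1/2$ precisely because $q\ge2$; the opposite monotonicities of $g$ and of the band then keep $g$ strictly outside $[m(t),M(t)]$ for every remaining integer $t$. Hence $tq+p\neq n$ for all blocks, the polygon is asymmetric, and by the first step it is of maximum area. I expect the main obstacle to be the exact bookkeeping for $m(t)$ and $M(t)$ — in particular handling degenerate runs ($r=0$, or an empty/short $q$-run) and checking the escape inequality uniformly in the regime $r\gg q$, where $g$ meets the band only near the centre but several window-lengths must still be ruled out.
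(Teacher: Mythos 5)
Your proposal is correct, and its core (the asymmetry check) follows a genuinely different route from the paper's. The first step coincides: both observe that the displayed sequences are permutations of the sequence of Lemma~\ref{lemma_max}, so maximality of area is automatic and only asymmetry needs proof. For asymmetry the paper exploits the near-periodicity it built into the sequences: $a_i = a_{i+(k-1)/2}$ for $0\le i<(k-1)/2$, with the single entry $a_{k-1}$ unpaired. Pairing the entries of any window of $(k-1)/2$ consecutive elements with their ``homologous'' partners in the complementary window gives $S_B-S_A=a_{k-1}>0$ or $S_B-S_A=a_i+a_j-a_{k-1}\ge 2q-(q+1)>0$, whence every window of length at most $(k-1)/2$ sums to less than $n$ and its complement to more than $n$, so no window sums to exactly $n$. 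You instead count occurrences of $q+1$: attainable counts in length-$t$ windows form an interval $[m(t),M(t)]$, and you run the line $g(t)=n-tq$ against this band, using that $m$ and $M$ are nondecreasing in $t$ while $g$ drops with slope $-q\le -2$; everything then reduces to the single check $M\bigl((k-1)/2\bigr)<g\bigl((k-1)/2\bigr)$, the case $t\ge (k+1)/2$ following by complementation. The bookkeeping you flag as the main obstacle does work out: a window of length $(k-1)/2$ can touch a second $(q+1)$-run only after swallowing an entire $q$-run, so $M\bigl((k-1)/2\bigr)=r/2$ for $r$ even and $(r+1)/2$ for $r$ odd, while $g\bigl((k-1)/2\bigr)=(q+r)/2$ exceeds this precisely because $q\ge 2$. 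Note that both arguments need $k<n$ (equivalently $q\ge 2$): the paper uses it implicitly in $a_i+a_j-a_{k-1}>0$, you use it explicitly in the central escape. What the paper's pairing buys is brevity — no run-structure analysis at all, the inequality $S_A<n<S_B$ falls straight out of the construction. What your band picture buys is transparency about the parity obstruction: for $k$ even the centre $t=k/2$ is an integer and $g(k/2)=r/2$ lies inside the band, which is exactly the paper's subsequent impossibility theorem; moreover your framework applies verbatim to any arrangement of $q$'s and $(q+1)$'s once its central band is computed.
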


\begin{proof}
We note first that both sequences are composed by $r$ occurrences of the element $q+1$ and $k-r$ occurrences of the element $q$. Therefore, both $k$-gons are maximum area by Lemma \ref{lemma_max}. Moreover, $a_i = a_{i+\frac{(k-1)}{2}}$ for $0\leq i<\frac{(k-1)}{2}$, we say that $a_{i+\frac{(k-1)}{2}}$ is the \emph{homologous element} of $a_i$ in the sequence and vice versa. Notice that, except for $a_{k-1}$, every element in the sequence has its homologous element.

We now prove that the encoded polygon is asymmetric. Let $A$ and $B$ be a partition into two subsequences of the proposed cyclic sequence so that $A$ contains $\frac{(k-1)}{2}$ consecutive elements and $B$ the other $\frac{(k+1)}{2}$ elements.% Hence, neither $A$ nor $B$ contain symmetrical elements.

Let $S_A$ and $S_B$ be the sum of the elements of $A$ and $B$, respectively.
If $a_{k-1}\notin A$, then $B$ consists of the $\frac{(k-1)}{2}$ homologous elements of the corresponding of $A$ and $a_{k-1}$ and hence $S_B=S_A+a_{k-1}$. On the other hand, if $a_{k-1}\in A$, then $B$ contains $\frac{(k-3)}{2}$ homologous elements of the ones in $A$ plus two more elements, say $a_i,a_j$ (recall that $a_{k-1}$ has no homologous element). Therefore $S_B-S_A=a_i+a_j-a_{k-1}$ for some indices $i\not=j$. Regardless of the case, we have that $S_B>S_A$, $S_A<n$  and $S_B>n$. As a consequence, any sum of $\frac{(k-1)}{2}$ consecutive elements is different from $n$.

For an arbitrary partition of the sequence, we proceed in a similar way. If the sizes of $A$ and $B$ are arbitrary, we also have two cases. If $\vert A \vert \leq \frac{(k-1)}{2}$ then $S_A<n$ and if $\vert B \vert \leq \frac{(k-1)}{2}$ then $S_B<n$. In both cases, the sequences correspond to asymmetric polygons.

\end{proof}

\subsection{ Case $k$ even}

Unfortunately, if the number of vertices $k$ is even, the following result prevents us from seeking the solution as a permutation of the sequence of Lemma \ref{lemma_max}.

\begin{theorem}
If $k$ is even, then no permutation of the sequence 
$$<\seq{q+1}{r},\;\seq{q}{k-r}>$$
%$$a_{1}=a_{2}=\ldots=a_{r}=q+1$$
%$$a_{r+1}=\ldots=a_{k}=q$$ 
corresponds to an asymmetric polygon.
\end{theorem}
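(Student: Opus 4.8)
The plan is to show that any permutation of the sequence $\langle\seq{q+1}{r},\;\seq{q}{k-r}\rangle$ must admit a consecutive block of elements summing to exactly $n$, which by property~3 of the Remark means the corresponding polygon is \emph{not} asymmetric. Since the area is permutation-invariant (property~2), the total of all $k$ entries is fixed at $2n = kq + r$. The core observation I would exploit is that when $k$ is even we can split the cyclic sequence into two halves of $k/2$ consecutive elements each, and ask whether one half can be made to sum to exactly $n$; a parity/pigeonhole argument on the partial sums should force this.

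First I would set up the partial-sum framework. Writing $a_0,\dots,a_{k-1}$ for an arbitrary permutation and extending cyclically, I would define $S_j = a_j + a_{j+1} + \dots + a_{j + k/2 - 1}$, the sum of the $j$-th window of $k/2$ consecutive terms. The key identity is that $S_j$ and its complementary window $S_{j + k/2}$ satisfy $S_j + S_{j+k/2} = 2n$, because together they exhaust all $k$ entries. Hence $S_j = n$ for some window if and only if a window's sum equals its complement's; equivalently, I want to show $S_j - n$ must vanish for some $j$. The second step is a discrete intermediate-value argument: as $j$ increases by one, the window sum changes by $S_{j+1} - S_j = a_{j+k/2} - a_j$, and since every entry is either $q$ or $q+1$, consecutive window sums differ by at most $1$. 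Because $S_j + S_{j+k/2} = 2n$, the sequence $S_0, S_1, \dots$ takes values symmetric about $n$, so it cannot straddle $n$ while skipping it — a step size of at most $1$ forces some $S_j$ to hit $n$ exactly, unless all windows avoid $n$ by lying strictly on one side, which the symmetry $S_j + S_{j+k/2} = 2n$ rules out.

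I would then make this rigorous via parity. Each window of $k/2$ terms has sum $S_j = (k/2)q + (\text{number of } (q{+}1)\text{'s in the window})$, so the residues of the $S_j$ modulo $1$ are irrelevant, but the count of $(q{+}1)$'s across complementary windows totals exactly $r$. The decisive point is that if some window contains exactly $r/2$ of the $(q{+}1)$'s when $r$ is even, or the integer-valued midpoint is achieved, then that window sums to $n$; and the step-size-one property guarantees the count of $(q{+}1)$'s in a sliding window passes through every integer between its minimum and its value at the complementary window, which brackets the target. The main obstacle, and where I expect to spend the most care, is handling the parity of $r$ and of $k/2$ cleanly: when $r$ is odd one cannot split the $(q{+}1)$'s evenly between a window and its complement, so the naive "achieve exactly $r/2$" claim fails, and I would need to argue instead that the window sum, changing by unit steps and being symmetric about the non-integer-balanced point, still lands on $n$ because $n = (k/2)q + r/2$ has the right integrality — indeed $2n = kq + r$ forces $r$ even whenever $n$ is such that $n$ is an integer multiple matching $(k/2)q$, so I would first check whether $r$ is necessarily even here, which would collapse the odd case entirely and simplify the whole argument.
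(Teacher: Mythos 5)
Your proposal is correct and is essentially the paper's own proof: the paper defines $f(i)=a_i+a_{i+1}+\cdots+a_{i+k/2-1}$, observes $f(i+1)-f(i)=a_{i+k/2}-a_i\in\{-1,0,1\}$ and $f(0)+f(k/2)=2n$, and concludes by the discrete intermediate-value argument that some window sum equals $n$, exactly as in your first two paragraphs. The parity worries in your final paragraph are moot: since $k$ is even and $2n=kq+r$, the remainder $r$ is automatically even, and in any case the intermediate-value step never needed it --- it only requires that the window sums are integers changing by unit steps while straddling the integer $n$.
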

\begin{proof}
Set $f(i)=a_i+a_{i+1}+...+a_{i+k/2-1}$. We have $f(i+1)-f(i)=a_{i+k/2}-a_i$, which implies that $f(i+1)-f(i)\in \left\{-1,0,1\right\}$. If $f(0)=n$, then the sequence is non-asymmetric. Assume that $f(0)<n$. Thus, $f(k/2)>n$ ($f(0)+f(k/2)=2n)$. 
 Consider now the discrete function given by the integer values $f(0), f(1), ..., f(k/2)$. Since $\vert f(i+1)-f(i) \vert$ is equal to 1 or 0, $f(0)<n$ and $f(k/2)>n$, it follows that there exists an index $j$, $0<j<k/2$, such that $f(j)=n$ and the sequence generates a non-asymmetric polygon. The proof for the case $f(0)>n$ is analogous.
\end{proof}

In the following, we prove that it is possible to find the solution for the case $k$ even by considering the sequence that provides the second maximum area.

\begin{theorem}\label{th_even}
If $k$ is even, then any permutation of  the sequence
$$<\seq{q+1}{r+1},\;q-1,\;\seq{q}{k-r-2}>$$
%$$a_1=a_2=...=a_{r+1}=q+1$$ 
%$$a_{r+2}=q-1$$ 
%$$a_{r+3}=...=a_k=q$$
corresponds to the sub-polygon of $k$-vertices with the second maximum area.
\end{theorem}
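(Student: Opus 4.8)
The plan is to argue entirely at the level of the multiset $\{a_0,\dots,a_{k-1}\}$, since by permutation-invariance of the area (Remark, item~2) the area depends only on how many times each value occurs. Write $s(x):=\sin(\tfrac{\pi x}{n})$, so that the area equals $\tfrac12\sum_i s(a_i)$. First I would record the composition of the proposed sequence: it has $r+1$ copies of $q+1$, one copy of $q-1$, and $k-r-2$ copies of $q$; these are $k$ terms summing to $2n$. Call this multiset $C_1$, and call the balanced multiset of Lemma~\ref{lemma_max} (namely $r$ copies of $q+1$ and $k-r$ copies of $q$) the optimum $C_0$. Two preliminary remarks are worth isolating: since $k<n$ forces $q\ge 2$, every entry of $C_1$ is $\ge 1$, so $C_1$ is a legitimate sub-polygon with area $<\mathrm{area}(C_0)$ (its spread is $2$); and since $k$ is even and $2n=kq+r$, the remainder $r$ is even, a parity fact used at the very end. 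The theorem asserts that $C_1$ attains the \emph{second} largest area value, so ties at that level are allowed.

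The backbone is a reduction to multisets lying a single balancing move away from $C_0$. A balancing move replaces a pair $(a,b)$ with $a-b\ge 2$ by $(a-1,b+1)$; by Lemma~\ref{lemma_swap} (its hypothesis $2n>a+b$ holding because $k\ge 3$) each such move strictly increases the area, and iterating on any $C\ne C_0$ terminates at the unique balanced multiset $C_0$. Hence for arbitrary $C\ne C_0$ I would fix a balancing chain $C\to\cdots\to P\to C_0$ and conclude $\mathrm{area}(C)\le\mathrm{area}(P)$, where $P$ is a single un-balancing move away from $C_0$. It then suffices to maximize the area over the finitely many penultimate multisets $P$ and to show $C_1$ attains that maximum. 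Enumerating the un-balancing moves from $C_0$ gives exactly three types: (M1) split two $q$'s into $q{+}1,q{-}1$, yielding $C_1$; (M2) split a $q$ and a $q{+}1$ into $q{-}1,q{+}2$; (M3) split two $q{+}1$'s into $q,q{+}2$. Move M1 is always available (from $r$ even and $r<k$ we get $k-r\ge 2$), while M2 and M3 require $r\ge 2$.

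The computations rest on the exact second difference
\[
s(x+1)-2s(x)+s(x-1)=-4\sin^2\!\Big(\tfrac{\pi}{2n}\Big)\,s(x),
\]
which follows from the product-to-sum identity $s(x+1)+s(x-1)=2\cos(\tfrac{\pi}{n})\,s(x)$. Expressing each move's area loss through this quantity gives
\[
\mathrm{loss}(\mathrm{M1})=4\sin^2\!\Big(\tfrac{\pi}{2n}\Big)s(q),\qquad
\mathrm{loss}(\mathrm{M3})=4\sin^2\!\Big(\tfrac{\pi}{2n}\Big)s(q+1),
\]
and $\mathrm{loss}(\mathrm{M2})-\mathrm{loss}(\mathrm{M1})=4\sin^2(\tfrac{\pi}{2n})\,s(q+1)>0$, so M1 strictly beats M2. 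The decisive comparison is M1 versus M3, i.e.\ whether $s(q)\le s(q+1)$. By sum-to-product, $s(q+1)-s(q)=2\sin(\tfrac{\pi}{2n})\cos\!\big(\tfrac{\pi(2q+1)}{2n}\big)$, which is nonnegative precisely when $2q+1\le n$. Here the parity enters: M3 is relevant only when $r\ge 2$, and then $2n=kq+r\ge kq+2$ with $k\ge 4$ forces $q\le(n-1)/2$, i.e.\ $2q+1\le n$, whence $\mathrm{loss}(\mathrm{M1})\le\mathrm{loss}(\mathrm{M3})$. Thus $C_1$ has the smallest loss among all penultimate multisets, hence the largest area below $\mathrm{area}(C_0)$, as claimed.

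I expect the main obstacle to be exactly the M1-versus-M3 step: naively $s(q)\le s(q+1)$ can fail once $\tfrac{\pi q}{n}$ moves past the peak of the sine, in which case the statement would be false. The crux is that M3 cannot occur unless $r\ge 2$, and that the $k$-even parity constraint $r\in\{0,2,4,\dots\}$ together with $k\ge 4$ pins $q$ below $n/2$ precisely in the regime where M3 is available (with equality, a genuine tie, exactly when $2q+1=n$). A secondary point requiring care is the justification that the second-best multiset must be penultimate, for which I would lean on $C_0$ being the unique area-maximizer (Lemma~\ref{lemma_max}) and on the strict monotonicity of area under balancing moves.
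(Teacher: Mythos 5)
Your proof is correct, and it reaches the theorem by a genuinely different route than the paper's, even though both end up comparing the same three candidates. The paper pins down the second-best multiset directly: exchange arguments via Lemma~\ref{lemma_swap} show every entry lies in $\{q-1,q,q+1,q+2\}$ and that $q-1$ and $q+2$ each occur at most once (the delicate step being that an improvement of the second-best polygon would have to be \emph{the} maximum, contradicting Lemma~\ref{lemma_max}), and then a counting step reduces to four cases indexed by $(A,D)\in\{0,1\}^2$, where $A$ and $D$ count occurrences of $q-1$ and $q+2$. Your chain-of-balancing-moves reduction replaces all of that: since every multiset $C\neq C_0$ flows to $C_0$ along strictly area-increasing moves, the second-largest area value is attained one un-balancing move from $C_0$, and your moves M1, M2, M3 are exactly the paper's cases $(A,D)=(1,0),(1,1),(0,1)$. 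From there the two arguments coincide in substance: with $s(x)=\sin(\tfrac{\pi x}{n})$, your $\mathrm{loss}(\mathrm{M3})-\mathrm{loss}(\mathrm{M1})=4\sin^2\bigl(\tfrac{\pi}{2n}\bigr)\bigl(s(q+1)-s(q)\bigr)=8\sin^3\bigl(\tfrac{\pi}{2n}\bigr)\cos\bigl(\tfrac{\pi(2q+1)}{2n}\bigr)$ is literally the paper's expression $2\cos\bigl(\tfrac{\pi(2q+1)}{2n}\bigr)\cdot 4\sin^3\bigl(\tfrac{\pi}{2n}\bigr)$ for $S(1,B_3,C_3,0)-S(0,B_2,C_2,1)$, and both proofs close on the same arithmetic crux: availability of the competitor forces $r\ge 2$, which with $k\ge 4$ gives $2n\ge 4q+2$, i.e.\ $2q+1\le n$, with equality yielding a genuine tie. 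What your route buys: it is shorter, it sidesteps the paper's support-bounding and counting steps entirely, the identity $\mathrm{loss}(\mathrm{M2})=\mathrm{loss}(\mathrm{M1})+\mathrm{loss}(\mathrm{M3})$ makes the M2 comparison immediate, and the reduction generalizes verbatim to any strictly concave symmetric objective on compositions. What the paper's route buys is explicitness: it never needs termination or uniqueness of the balancing process. To make your write-up self-contained you should state those two small facts explicitly — termination (each move strictly increases area and there are finitely many multisets, so the process stops, necessarily at a balanced multiset) and uniqueness of the balanced multiset (the counting inside the proof of Lemma~\ref{lemma_max}) — but both are one-line observations, not gaps in the approach.
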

\begin{proof}

We first prove that if $a_i$ is an element of a sequence providing the $k$-gon with the second maximum area 
then $a_i\in \left\{ q-1,q,q+1,q+2 \right\}$ for $i=0,\ldots,k-1$.

Denote by $a_{max}$ and $a_{min}$ the values $\max\{a_0,a_1,..., a_{k-1}\}$ and $\min\{a_0,a_1,...,a_{k-1}\}$, respectively. Suppose that $a_{max}\geq q+3$. 
Note that if $a_{min}\geq q+1$, then $\sum^{k-1}_{i=0}a_{i}>2n$ which is a contradiction. Therefore, we assume that $a_{min} \leq q$.
In this case, switching $a_{max}$ by $a_{max}-1\geq q+2$ and $a_{min}$ by $a_{min}+1$ yields a larger polygon according to Lemma \ref{lemma_swap}. However, the new polygon cannot be the largest one because it contains an element $a_j\geq q+2$, in contradiction with Lemma \ref{lemma_max}.

Suppose that $a_{min}\leq q-2$. Following a similar argument to the above case, $a_{max}\geq q+1$ and by switching $a_{max}$ and $a_{min}$ by $a_{max}-1$ and $a_{min}+1 \leq q-1$, respectively, the area of the new polygon increases. It cannot be the largest one because it contains an element $a_j\leq q-1$.%Since the elements of a maximal $k$-gon cannot be less than or equal to $q-1$, we conclude that $a_{min}> q-2$.

Using similar arguments we can show that the elements $q+2$ and $q-1$ appear at most once in the $k$-gon 
with the second maximum area.

Let $A,B,C,D$ be the number of occurrences of $q-1,q,q+1,q+2$, respectively. The we have:
\begin{eqnarray*}
A+B+C+D & = & k\\
A(q-1)+B(q)+C(q+1)+D(q+2) & = & 2n\\
(A+B+C+D)q+(-A+C+2D) & = & kq+r\\
-A+C+2D & = & r\\
C & = & r+A-2D
\end{eqnarray*}
%\begin{displaymath}A+B+C+D=k\end{displaymath}
%\begin{displaymath}A(q-1)+B(q)+C(q+1)+D(q+2)=2n\end{displaymath}
%\begin{displaymath}(A+B+C+D)q+(-A+C+2D)=kq+r\end{displaymath}
%\begin{displaymath}-A+C+2D=r\end{displaymath}
%\begin{displaymath}C=r+A-2D\end{displaymath}

Now, the area of the $k$-gon we are looking for is:

$$S(A,B,C,D)=A\sin\left(\frac{\pi (q-1)}{n}\right)+ B\sin\left(\frac{\pi q}{n}\right)+
C\sin\left(\frac{\pi (q+1)}{n}\right)+
D\sin\left(\frac{\pi (q+2)}{n}\right)$$

Since $0\leq A,D \leq 1$, we have four cases to consider. Case $(A,D)=(0,0)$ corresponds with the $k$-gon of maximum area (Lemma \ref{lemma_max}). Hence, the areas of the remaining cases are:

\begin{displaymath}
\begin{array}{l}
S(1,B_1,C_1,1) = \sin\left(\frac{\pi (q-1)}{n}\right)+ (k-r-1)\sin\left(\frac{\pi q}{n}\right)+
(r-1)\sin\left(\frac{\pi (q+1)}{n}\right)+
\sin\left(\frac{\pi (q+2)}{n}\right)\\
S(0,B_2,C_2,1) = (k-r+1)\sin\left(\frac{\pi q}{n}\right)+
(r-2)\sin\left(\frac{\pi (q+1)}{n}\right)+
\sin\left(\frac{\pi (q+2)}{n}\right)\\
S(1,B_3,C_3,0) = \sin\left(\frac{\pi (q-1)}{n}\right)+ (k-r-2)\sin\left(\frac{\pi q}{n}\right)+
(r+1)\sin\left(\frac{\pi (q+1)}{n}\right)
\end{array}
\end{displaymath}

%\begin{displaymath}S(1,B_1,C_1,1)=\sin\left(\frac{\pi (q-1)}{n}\right)+ (k-r-1)\sin\left(\frac{\pi q}{n}\right)+
%(r-1)\sin\left(\frac{\pi (q+1)}{n}\right)+
%\sin\left(\frac{\pi (q+2)}{n}\right)
%\end{displaymath}
%\begin{displaymath}S(0,B_2,C_2,1)=(k-r+1)\sin\left(\frac{\pi q}{n}\right)+
%(r-2)\sin\left(\frac{\pi (q+1)}{n}\right)+
%\sin\left(\frac{\pi (q+2)}{n}\right)
%\end{displaymath}
%
%\begin{displaymath}S(1,B_3,C_3,0)=\sin\left(\frac{\pi (q-1)}{n}\right)+ (k-r-2)\sin\left(\frac{\pi q}{n}\right)+
%(r+1)\sin\left(\frac{\pi (q+1)}{n}\right)
%\end{displaymath}

We claim that $S(1,B_1,C_1,1)<S(0,B_2,C_2,1)\leq S(1,B_3,C_3,0)$. To prove this claim, note that

\begin{displaymath}S(0,B_2,C_2,1)-S(1,B_1,C_1,1)=2\sin\left(\frac{\pi q}{n}\right)-\sin\left(\frac{\pi (q-1)}{n}\right)-\sin\left(\frac{\pi (q+1)}{n}\right)\ .
\end{displaymath}

By using Lemma \ref{lemma_swap} with $a=q+1, b=q-1$ we have that $S(0,B_2,C_2,1)-S(1,B_1,C_1,1)>0$. 
Notice that $2n\geq kq \geq 4q>2q=a+b$ and $a-b=2$ in this case.

To prove the other inequality, we use the following trigonometric identities:

$$\sin(\alpha)-\sin(\beta)=2\cos((\alpha + \beta)/2 )\sin((\alpha - \beta)/2 )$$
$$\sin(3\alpha)=3\sin(\alpha)-4\sin^{3}(\alpha)$$

Using these identities, we obtain the following:

\begin{align*}
S(1,B_3,C_3,0)-S(0,B_2,C_2,1)&=\sin\left(\frac{\pi (q-1)}{n}\right)+3\sin\left(\frac{\pi (q+1)}{n}\right) -3\sin\left(\frac{\pi q}{n}\right)-\sin\left(\frac{\pi (q+2)}{n}\right)\\
&=6\cos\left(\frac{\pi (2q+1)}{2n}\right)\sin\left(\frac{\pi}{2n}\right) -2\cos\left(\frac{\pi (2q+1)}{2n}\right)\sin\left(\frac{3\pi}{2n}\right)\\
&=2\cos\left(\frac{\pi (2q+1)}{2n}\right) \left( 3\sin\left(\frac{\pi}{2n}\right) -\sin\left(\frac{3\pi}{2n}\right) \right)\\
&=2\cos\left(\frac{\pi (2q+1)}{2n}\right)4\sin^{3}\left( \frac{\pi}{2n} \right)
\end{align*}

%\begin{displaymath}S(1,B_3,C_3,0)-S(0,B_2,C_2,1)=\end{displaymath}
%\begin{displaymath}\sin\left(\frac{\pi (q-1)}{n}\right)+3\sin\left(\frac{\pi (q+1)}{n}\right) -3\sin\left(\frac{\pi q}{n}\right)-\sin\left(\frac{\pi (q+2)}{n}\right)=
%\end{displaymath}
%\begin{displaymath}
%6\cos\left(\frac{\pi (2q+1)}{2n}\right)\sin\left(\frac{\pi}{2n}\right) -2\cos\left(\frac{\pi (2q+1)}{2n}\right)\sin\left(\frac{3\pi}{2n}\right)=
%\end{displaymath}
%\begin{displaymath}
%2\cos\left(\frac{\pi (2q+1)}{2n}\right) \left( 3\sin\left(\frac{\pi}{2n}\right) -\sin\left(\frac{3\pi}{2n}\right) \right)=\end{displaymath}
%\begin{displaymath}2\cos\left(\frac{\pi (2q+1)}{2n}\right)4\sin^{3}\left( \frac{\pi}{2n} \right). 
%\end{displaymath}

Because $2n=kq+r$, $k\geq 4$ and from the fact that $C_2=r-2\geq 0$, we get that $2n\geq 4q+2$ and $\frac{\pi (2q+1)}{2n}\leq \frac{\pi}{2}$. 
As a consequence, $S(1,B_3,C_3,0)-S(0,B_2,C_2,1)\geq 0$ yielding the result.
\end{proof}

The above result asserts that any sequence with coefficients $A=1$, $B=k-r-2$, $C=r+1$, $D=0$ provides the $k$-gon with the second maximum area. In the following result, we provide a sequence that in addition corresponds to 
an asymmetric $k$-gon. 

\begin{theorem}
The sequence corresponding to the maximum area asymmetric $k$-gon for $k$ even is as follows:
$$<q-1,\;\seq{q+1}{r/2},\;\seq{q}{(k-r-2)/2},\;\seq{q+1}{(r+2)/2},\;\seq{q}{(k-r-2)/2}>$$
%\begin{displaymath}a_1=q-1\end{displaymath}
%\begin{displaymath} a_2=...=a_{r/2+1}=q+1\end{displaymath}
%\begin{displaymath}a_{r/2+2}=...=a_{k/2}=q\end{displaymath}
%\begin{displaymath}a_{k/2+1}=...=a_{k/2+r/2+1}=q+1\end{displaymath}
%\begin{displaymath}a_{k/2+r/2+2}=...=a_{k}=q\end{displaymath}
 \end{theorem}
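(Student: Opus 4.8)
The plan is to establish two facts about the proposed encoding, which I will call $W$, and then combine them with what is already proved. First, $W$ is a permutation of the sequence in Theorem~\ref{th_even}: counting occurrences gives one copy of $q-1$, exactly $r/2+(r+2)/2=r+1$ copies of $q+1$, and exactly $(k-r-2)/2+(k-r-2)/2=k-r-2$ copies of $q$. Hence, by the permutation-invariance of the area (item 2 of the Remark) together with Theorem~\ref{th_even}, $W$ encodes a $k$-gon of the \emph{second} maximum area. Second, I will show that $W$ is asymmetric. Granting both, optimality is immediate: by Lemma~\ref{lemma_max} every maximum-area $k$-gon is a permutation of $<\seq{q+1}{r},\seq{q}{k-r}>$, and by the previous theorem none of those permutations is asymmetric; therefore every asymmetric $k$-gon has area strictly below the maximum, i.e.\ at most the second maximum, which $W$ attains while being asymmetric.

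The bulk of the work is the asymmetry. I set $t:=r/2$ and $m:=(k-r-2)/2$, so $k=2t+2m+2$, $k/2=t+m+1$, and $n=\tfrac{kq+r}{2}=(t+m+1)q+t$. Writing $a_l=q+\delta_l$, the encoding is $\delta_0=-1$, then a run of $t$ values $+1$ (run I), a run of $m$ zeros (run II), a run of $t+1$ values $+1$ (run III), and a run of $m$ zeros (run IV). By item 3 of the Remark, $W$ is asymmetric iff no cyclic block of consecutive arcs sums to $n$; equivalently, writing $P_j$ for the $j$-th prefix sum, there are no indices $0\le i<j\le k-1$ with $P_j-P_i=n$. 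The plan is to treat $i=0$ and $i\ge 1$ separately, exploiting that the cumulative sums $\Delta_j=\sum_{l<j}\delta_l$ dip to $-1$ only at $j=1$ and are non-decreasing, with steps in $\{0,1\}$, from $-1$ up to $2t$ for $j\ge 1$.

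For $i=0$ the condition reads $P_j=n$; since $P_{t+m+1}=(t+m+1)q+(t-1)=n-1$ and $P_{t+m+2}=n+q$, the strictly increasing prefix sums skip the value $n$. For $1\le i<j\le k-1$ the block avoids position $0$, so it is a contiguous piece of runs I,II,III,IV whose $\delta$-sum $E$ equals its number of $(q+1)$-arcs; with $\ell:=j-i$ the bad equation $P_j-P_i=n$ becomes $E-t=(\tfrac{k}{2}-\ell)q$. I will use two separation facts coming from the fixed order of the runs: (S1) a contiguous block with more than $t+1$ arcs equal to $q+1$ meets both I and III, hence contains all of II, so it has at least $m$ arcs equal to $q$; (S2) a contiguous block with more than $m$ arcs equal to $q$ meets both II and IV, hence contains all of III, so $E\ge t+1$. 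If $\ell=k/2$ then $E=t$, its $q$-count is $m+1>m$, and (S2) gives $E\ge t+1$, a contradiction. If $\ell<k/2$ then $E=t+(\tfrac{k}{2}-\ell)q\ge t+q\ge t+2>t+1$, so (S1) forces its $q$-count $\ell-E\ge m$, yet $\ell-E\le m-q<m$, a contradiction. If $\ell>k/2$ then $E=t-(\ell-\tfrac{k}{2})q<t$ while its $q$-count $\ell-E>\ell-t\ge m+2>m$, so (S2) gives $E\ge t+1$, a contradiction. This exhausts all cases.

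I expect the main obstacle to be precisely the case $1\le i<j$ for \emph{small} $q$: when $q$ is near $2$ the three arc lengths $q-1,q,q+1$ are nearly equal, so a block of length $\ell\ne k/2$ could a priori reach $n$ by trading length against composition, and the crude bound $E\le \ell$ does not exclude this. The resolution, and the crux of the argument, is the left-to-right order of the four runs encoded in (S1) and (S2): any block whose composition is dictated by $E-t=(\tfrac{k}{2}-\ell)q$ is forced to swallow an entire opposite-type run, contradicting that very composition. One should also record the standing hypothesis $q\ge 2$ (needed so that $q-1\ge 1$, i.e.\ so that $W$ encodes a genuine polygon), since the inequalities $q\ge 2$ drive the estimates that close the small-$\ell$ and large-$\ell$ subcases.
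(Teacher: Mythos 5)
Your proof is correct, and it reaches the theorem by a genuinely different route than the paper in the step that carries the weight. The framing coincides: like the paper, you count multiplicities to see the proposed sequence is a permutation of that of Theorem~\ref{th_even}, hence realizes the second maximum area, and you observe that by Lemma~\ref{lemma_max} together with the preceding theorem (no permutation of the maximum sequence is asymmetric when $k$ is even) every asymmetric $k$-gon has area at most this second maximum, so proving asymmetry of the proposed sequence finishes the job. For the asymmetry itself, however, the paper uses a pairing argument modeled on Theorem~\ref{th_odd}: since $a_i=a_{i+k/2}$ for $0<i<k/2$ (``homologous'' elements), a block of $k/2$ consecutive entries and its complement have sums differing by exactly $|a_0-a_{k/2}|=2$, so neither sum equals $n$, and any shorter block has sum strictly smaller than its complement's, hence smaller than $n$. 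You instead run a prefix-sum and run-structure analysis: blocks starting at position $0$ are excluded because the prefix sums jump from $n-1$ directly to $n+q$, and blocks avoiding position $0$ are excluded because the equation $E-t=(\tfrac{k}{2}-\ell)q$ forces a composition (count of $(q+1)$-arcs versus $q$-arcs) that your separation facts (S1)--(S2), which encode the left-to-right order of the four runs, rule out in each of the cases $\ell=k/2$, $\ell<k/2$, $\ell>k/2$. Both arguments are sound; the paper's pairing is shorter and recycles the odd-$k$ idea, while yours is more computational but makes explicit exactly where $q\ge 2$ (equivalently $k<n$) enters and why no length-versus-composition trade can reach $n$. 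Two points worth tightening in your write-up: the reduction to indices $0\le i<j\le k-1$ deserves its one-line justification by complementarity (a block containing $a_{k-1}$ but not $a_0$ sums to $n$ iff its complementary prefix does), and in the degenerate cases $r=0$ or $r=k-2$ some runs are empty, so the hypotheses of (S1)/(S2) become unsatisfiable and the contradiction in the corresponding cases arrives even more directly---the proof still closes.
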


\begin{proof}
The proof is similar to that of Theorem \ref{th_odd}.
The sequence proposed, say $Q$, is a permutation of the sequence in Theorem \ref{th_even}. Therefore, its corresponding $k$-gon has the second maximum area. We show that it is also asymmetric. 

 Observe that for $0<i< k/2$, it holds that $a_i=a_{i+k/2}$. We say that $a_{i+k/2}$ and $a_i$ are homologous elements for $0<i< k/2$.
 
Let $A$ and $B$ be a partition into two subsequences of the cyclic sequence $Q$. 
First, suppose that $A$ (and $B$) consists of $k/2$ consecutive elements.
In this case, neither $A$ nor $B$ contains homologous elements. 
Let $S_A$ and $S_B$ be the sum of the elements of $A$ and $B$, respectively. If $a_0\in A$ then $a_{k/2}\in B$ and $S_A-S_B=a_0-a_{k/2}=-2\neq 0$. Similarly, if $a_0 \in B$ we have $S_A-S_B\neq 0$ proving our claim.

Suppose now that the subsequence $A$ contains less than $k/2$ consecutive elements. 
Then, every element in $A$ has its homologous element in $B$ except maybe for $a_0$ or $a_{k/2}$.
If $a_0 \in A$, then $S_B-S_A=a_{k/2}-a_0+a_u+a_v+ \ldots$, where $a_u,a_v, \ldots$ are elements of $B$ having no homologous element in $A$. In this case, $S_B>S_A$. The case $\vert B \vert<k/2$ is analogous.
Hence, there are no indices $i,j$, $i\neq j$, so that $a_i+a_{i+1}+...+a_j=n$. This completes the proof
of the theorem.
\end{proof}

\section{The general case}\label{sec:General Case}

D\'iaz-B\'a\~nez \etal~\cite{pilz} showed that the following problem is proved to be NP-hard: Given a set of $n$ line segments in the plane (or in a circle), find a maximum-area convex polygon having as vertices at least one endpoint from each segment. They also show that if the segments are pairwise disjoint, then the problem can be solved in polynomial time. A constrained version in which each segment joins two antipodal points on a circle was recently studied by Aicholzer \etal~\cite{aichholzer2013}. They showed that this constrained problem can be solved with a simple linear time algorithm and they asked about the hardness of the problem of selecting $k$ endpoints instead of $n$. 
As a consequence of the hardness result of D\'iaz-B\'a\~nez \etal~\cite{pilz}, this problem becomes NP-hard if we remove the antipodality constraint.
 
A dynamic programming algorithm presented in this section finds the asymmetric $k$-gon of maximum area in time $O(kn^4)$. For the special cases $k=3$ and $k=4$ we show how to solve the problem in linear time. 

\subsection{Dynamic programming algorithm}
%In \cite{aichholzer2013} it's shown that the vertices in the maximal antipodal polygon alternate in the endpoints of diameters, except by two consecutive points when $n$ is even. In fact the maximal asymmetric $k$-gon in a set $S$ formed by the endpoints of $n$ diameters, $k<n$, has the same properties that a maximal antipodal polygon over the $k$ selected diameters (\textbf{ADD Figure}).

Let $S:=\{p_0,p_0', p_1,p_1',\dots,p_{n-1},p_{n-1}'\}$ be the set of the endpoints of $n$ diameters in the unit circle and let $O$ be the center of this circle. We define a \emph{wedge} $(i,j)$ as a convex polygon being the convex hull of $O$ and a subsequence of points (need not be consecutive) of $S$ visited when walking clockwise along the circle from $p_i$ to $p_j$, see Figure~\ref{fig:def_wedge}. We say that $p_i$ and $p_j$ are anchors of a wedge $(i,j)$.

\begin{figure}[h]
\centering
\begin{subfigure}[b]{.26\textwidth}
\centering
\includegraphics[scale=.8, page=11]{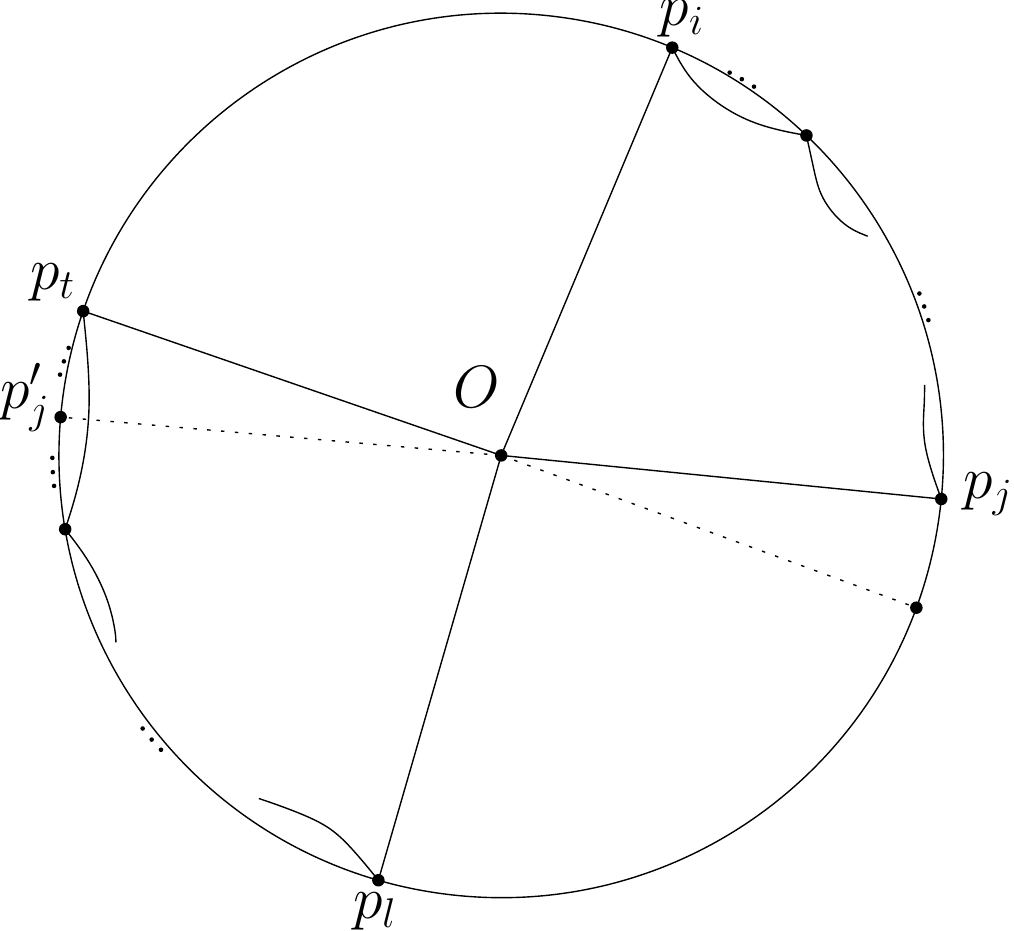}
\caption{}
\label{fig:def_wedge}
\end{subfigure}%
\begin{subfigure}[b]{.37\textwidth}
\centering
\includegraphics[scale=.5, page=1]{double_wedge.pdf}
\caption{}
\end{subfigure}%
\begin{subfigure}[b]{.37\textwidth}
\centering
\includegraphics[scale=.5, page=2]{double_wedge.pdf}
\caption{}
\end{subfigure}
\caption{The figure a) shows a wedge with anchors $p_i$ and $p_j$, b) and c) show the two possible configurations of a double-wedge $(i,j,l,t)$.}
\label{fig:double_wedge}
\end{figure}

%\red{$i,j,l$ and $t$ are not the apices of the wedge, the apex is $O$}
A \emph{double-wedge} $(i,j,l,t)$ is the union of two disjoint wedges $(i,j)$ and $(l,t)$ such that: (1) this union contains no diameter and (2) either $p_t'$ lies clockwise between $p_i$ and $p_j$, or $p_j'$ lies clockwise between $p_l$ and $p_t$; see Figure~\ref{fig:double_wedge} for an illustration. The anchors of a double-wedge are the anchors of the wedges that form it.

\begin{lemma}\label{lemma:Double wedge and two triangles}
%Let $S:=\{p_1,p_1', p_2,p_2',\dots,p_n,p_n'\}$ be the set of the endpoints of $n$ diameters in the unit circle and let $O$ the center of this circle. 
The maximum area asymmetric $k$-gon ($k\geq3$) on $S$ is formed by the union of a double-wedge $(i,j,l,t)$ and the triangles $Op_jp_l$ and $Op_ip_t$.
\end{lemma}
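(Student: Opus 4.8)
The plan is to show that any maximum-area asymmetric $k$-gon $P$ on $S$ decomposes exactly as described. The central idea is that an asymmetric polygon, by definition, contains no diameter, so its vertices must ``avoid'' antipodal pairs; this forces a gap structure that I will exploit. First I would observe that since $P$ is convex and inscribed in the circle, its vertices appear in clockwise order around the circle, partitioning the circle into arcs. Because $P$ contains no diameter, the center $O$ may or may not lie inside $P$, but in either case I want to locate the two ``longest'' angular gaps between consecutive vertices of $P$. The key structural claim is that an optimal asymmetric $k$-gon has exactly two arcs (gaps between consecutive chosen vertices) that are ``large'' enough to force the double-wedge splitting, and these two gaps are what separate the two wedges.

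The main steps I would carry out are as follows. First, I would establish that for the maximum-area polygon, the center $O$ lies inside $P$ (or handle the boundary case separately): if $O$ were outside, one could reflect or move a vertex to strictly increase the area, contradicting optimality. Assuming $O \in P$, I would connect $O$ to each vertex of $P$, triangulating $P$ into $k$ triangles of the form $Oq_sq_{s+1}$. Next, I would use the asymmetry condition together with the antipodality of the diameters to identify two consecutive vertices $p_j, p_l$ (in clockwise order) and two other consecutive vertices $p_t, p_i$ such that the antipode of one anchor falls clockwise inside the opposite arc — this is precisely condition (2) in the definition of a double-wedge. The triangles $Op_jp_l$ and $Op_ip_t$ are exactly the two triangles spanning the two large gaps, and removing them leaves two contiguous chains of vertices which, together with $O$, form the two wedges $(i,j)$ and $(l,t)$.

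The heart of the argument is verifying that the union of these two wedges contains no diameter (condition (1)) and that the antipodal interleaving condition (2) must hold at an optimum. For condition (1), I would argue directly from the asymmetry of $P$: since $P$ contains no diameter and each wedge is built from vertices of $P$ plus the center, no diameter can appear within or across the wedges without violating asymmetry. For condition (2), I would argue by contradiction: if neither $p_t'$ lay clockwise between $p_i$ and $p_j$ nor $p_j'$ between $p_l$ and $p_t$, then the angular extent covered by $P$ together with the two anchor-triangles would leave room to insert an additional vertex or enlarge a gap while preserving asymmetry, increasing the area and contradicting maximality. Thus the interleaving of an anchor's antipode into the opposite wedge is forced exactly by optimality.

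The hard part will be making the gap-selection and the interleaving condition (2) precise and exhaustive. The difficulty is that there are genuinely two distinct geometric configurations (shown in Figures~(b) and (c)), depending on which anchor's antipode lands in which wedge, and I must confirm that every optimal asymmetric polygon falls into one of these two cases and that the decomposition into a double-wedge plus the two triangles $Op_jp_l$ and $Op_ip_t$ is unambiguous. I expect that carefully tracking the clockwise order of the four anchors $p_i, p_j, p_l, p_t$ and their antipodes around the circle, and ruling out degenerate placements using the strict area gain from Lemma~\ref{lemma_swap}-type exchange arguments, will be where most of the care is needed.
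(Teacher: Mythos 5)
There is a genuine gap, and it sits exactly where you predicted the difficulty would be: in selecting the anchors and in proving condition (2). First, the decomposition is not obtained from the ``two longest gaps'' of $P$, and nothing in your sketch ties ``longest'' to the antipodal interleaving required by condition (2). The paper anchors the decomposition differently: fix an \emph{arbitrary} vertex $p_i$ of the optimal polygon $Q$; one separating gap is the gap of $Q$ that contains the antipode $p_i'$ (its endpoints are the anchors $p_j$ and $p_l$), and the other is the gap ending at $p_i$ (its endpoints are $p_t$ and $p_i$). With that choice, condition (2) reduces to a three-case check on where $p_j'$ lies: clockwise between $p_l$ and $p_t$ (done), between $p_t$ and $p_i$ (then $p_t'$ lands between $p_i$ and $p_j$, done), or between $p_i'$ and $p_l$, and only this last case must be excluded. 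Note that condition (2) is \emph{not} automatic for asymmetric polygons: for four vertices at $0^\circ, 10^\circ, 20^\circ, 30^\circ$, no split into two chains satisfies it, since all antipodes fall in the big empty arc. So any proof must use optimality in an essential way, and your stated use of optimality --- ``insert an additional vertex or enlarge a gap'' --- cannot work: the polygon must have exactly $k$ vertices, so no vertex may be inserted, and ``enlarging a gap'' is not an operation with a defined effect on the area.

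Second, the engine that the paper uses to exclude the bad case (and also to prove that the anchors $p_j$, $p_l$, $p_t$ exist and differ from $p_i$ --- a point your proposal never addresses) is a reduction you do not make: let $S_Q$ be the set of vertices of $Q$ together with their antipodes; then $Q$ must be a maximum-area \emph{antipodal} polygon on $S_Q$, since any larger antipodal polygon on $S_Q$ would be a larger asymmetric $k$-gon on $S$. By the characterization of maximum antipodal polygons in \cite{aichholzer2013}, such a polygon cannot have all its vertices in one closed semicircle determined by one of its own diameters, and that is precisely what the bad case of condition (2) (and the degenerate case $p_j=p_i$, etc.) would force. Your substitute claim --- if $O$ were outside $P$, ``one could reflect or move a vertex to strictly increase the area'' --- is the right instinct, and for asymmetric polygons having $O$ in the interior is indeed equivalent to the needed semicircle exclusion; but as stated it is not a proof. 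Reflecting the natural candidate, an extreme vertex, can strictly \emph{decrease} the area: for vertices at $0^\circ, 170^\circ, 176^\circ$ (all inside a semicircle, $O$ outside), reflecting $176^\circ$ to $356^\circ$ shrinks the area from about $0.104$ to about $0.069$, and reflecting $0^\circ$ to $180^\circ$ is even worse; only reflecting the middle vertex helps. Identifying which vertex to move, uniformly in all configurations, is exactly the nontrivial content that the citation to \cite{aichholzer2013} supplies in the paper's proof. Until you either make that reduction or carry out the exchange argument in full, both the existence of the anchors and condition (2) remain unproven.
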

\begin{proof}
Let $Q$ be the maximum area asymmetric $k$-gon on $S$. 
Let $p_i$ be a vertex of $Q$ and let $p_i'$ be its antipodal point. 
Let $p_j$ (\emph{resp.} $p_l$) be the first vertex of $Q$ found when walking counterclockwise (\emph{resp.} clockwise) from $p_i'$ along the boundary of the circle. 
Let $p_t$ be the first vertex of $Q$ found when walking counterclockwise from $p_i$ along the boundary of the circle. See Figure \ref{fig:asym_dwedge}.% \red{This needs a reference to a picture, also Figure 2 b) does not match this description}

We prove by contradiction that $p_j$, $p_l$ and $p_t$ exist and are different from $p_i$. Let $S_Q\subseteq S$ be the set of vertices of $Q$ and its antipodal points and the asymmetric $k$-gon $Q$ of maximum area on $S$ is a maximum area antipodal on $S_Q$. Suppose that $p_j$ does not exist or matches with $p_i$, then all vertices of $Q$ lie in the same semicircle of the two defined by the diameter $p_ip_i'$, then $Q$ would not be a maximum area antipodal polygon on $S_Q$\cite{aichholzer2013}, this is a contradiction. The proof that $p_l$ and $p_t$ exist and are different from $p_i$ is analogous, note that $p_l$ can matches with $p_t$. %  in the two semi-circles defined by the diameter $p_i,p_i'$\cite{aichholzer2013}.% because $M$ is the maximal antipodal polygon in the subset formed by the endpoints of the $k$ selected diameters and it has an alternating configuration\cite{aichholzer2013}\red{This argument needs more work, remind the reader what is an alternating configuration, the previous reference was in the introduction and was not a formal definition}. 
The union of the wedges $(i,j)$ and $(l,t)$ contains no diameter because $Q$ is an asymmetric polygon. We now prove that $p_j'$ lies clockwise between $p_l$ and $p_t$ or $p_t'$ lies clockwise between $p_i$ and $p_j$. If $p_j'$ lies clockwise between $p_t$ and $p_i$ then $p_t'$ lies clockwise between $p_i$ and $p_j$, if $p_j'$ lies clockwise between $p_l$ and $p_t$ we are done, finally if $p_j'$ lies clockwise between $p_i'$ and $p_l$ we obtain a contradiction because all vertices of $Q$ would lie in the same semicircle of the two defined by the diameter $p_jp_j'$. Therefore, $Q$ is not a maximum area antipodal polygon on $S_Q$\cite{aichholzer2013}.
%Also, \red{the double-wedge?} $(i,j,l,t)$ satisfies properties (1) and (2). \red{Why? elaborate more.}
\end{proof}

\begin{figure}
\centering
\includegraphics[scale=.55, page=3]{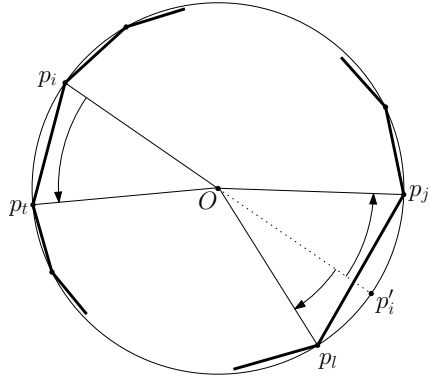}
\caption{A maximum area asymmetric $k$-gon on $S$ formed by the union of a double-wedge $(i,j,l,t)$ and the triangles $Op_jp_l$ and $Op_ip_t$.}
\label{fig:asym_dwedge}
\end{figure}

When a double-wedge uses $s$ points of $S$ we say it is an \emph{$s$-double-wedge}. For $3 \le s\le k$, let $f(i,j,l,t,s)$ be the area of the $s$-double-wedge of maximum area with anchors $(i,j,l,t)$. Notice that for $s=3$ either $p_i$ and $p_j$, or $p_l$ and $p_t$ coincide. 
By Lemma~\ref{lemma:Double wedge and two triangles}, the area of the maximum area asymmetric $k$-gon is equal to the maximum of $\{f(i,j,l,t,k)+A_{Op_jp_l}+A_{Op_ip_t}\}$, where $(i,j,l,t)$ goes over all the possible anchors of a $k$-double-wedge. Therefore, having computed $f(i,j,l,t,k)$ for all possible $k$-double-wedges, the maximum area asymmetric $k$-gon can be found in time $O(n^4)$.

\begin{lemma}
All possible values of $f(i,j,l,t,s)$ with $3\leq s\leq k$ can be computed in time $O(kn^5)$.
\end{lemma}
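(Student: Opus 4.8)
The plan is to compute the entire table $f(\cdot,\cdot,\cdot,\cdot,\cdot)$ by dynamic programming, filling the entries layer by layer in order of increasing $s$. Each of the four anchors ranges over the $O(n)$ points of $S$ and $s$ ranges over $\{3,\dots,k\}$, so there are $O(kn^4)$ entries. To obtain the claimed $O(kn^5)$ bound it therefore suffices to show that, once all entries of size $s-1$ are known, each entry of size $s$ can be evaluated in $O(n)$ time, and that the base layer $s=3$ can be tabulated within the same budget.

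For the base case, recall that a $3$-double-wedge has one degenerate wedge, so that either $p_i=p_j$ or $p_l=p_t$; the non-degenerate part is then a single triangle and $f$ is read off directly as its area whenever the tuple satisfies conditions (1) and (2) of the definition of a double-wedge, and is set to $-\infty$ otherwise so that invalid tuples can never win a maximum. Since one anchor pair coincides there are only $O(n^3)$ such tuples, and the base layer costs $O(n^3)$.

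For $s\ge 4$ I would use a peeling recurrence. The clockwise-last vertex of the optimal $s$-double-wedge is $p_j$ in the wedge $(i,j)$ and $p_t$ in the wedge $(l,t)$, and deleting the outermost triangle of either wedge leaves an admissible $(s-1)$-double-wedge. Writing $p_m$ for the vertex preceding $p_j$ and $p_u$ for the one preceding $p_t$, the recurrence reads
$$f(i,j,l,t,s)=\max\Bigl\{\max_{m}\bigl[f(i,m,l,t,s-1)+A_{Op_mp_j}\bigr],\ \max_{u}\bigl[f(i,j,l,u,s-1)+A_{Op_up_t}\bigr]\Bigr\},$$
where $m$ runs over the $O(n)$ positions clockwise strictly between $p_i$ and $p_j$, and $u$ over those strictly between $p_l$ and $p_t$ (when a wedge is already degenerate the corresponding inner maximum is empty and contributes nothing). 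Both inner maxima examine $O(n)$ predecessors, so each entry costs $O(n)$ and the whole table is filled in $O(kn^5)$ time. Correctness rests on optimal substructure: if the $(s-1)$-double-wedge left after peeling were not of maximum area for its own anchors, replacing it by a larger one and re-attaching the same triangle would contradict the optimality of $f(i,j,l,t,s)$; conversely every term in the maximum is the area of a genuine $s$-double-wedge, so no spurious value is produced. Taking the maximum over \emph{both} wedges is what lets the recursion descend to the degenerate base layer.

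The step I expect to be the main obstacle is guaranteeing that re-attaching a triangle keeps the double-wedge \emph{admissible}, that is, that appending $p_j$ (resp.\ $p_t$) neither makes the two wedges overlap nor creates a diameter. Disjointness and condition (2) depend only on the anchors and on the antipodes $p_j'$, $p_t'$, so they can be tested in $O(1)$ once the clockwise order of $S$ and the location of each antipode in it have been precomputed. The truly delicate point is condition (1): a newly added outer vertex can only form a diameter with a vertex lying in the \emph{other} wedge, and one must argue that the separation enforced by the two gaps, together with condition (2), confines these potential conflicts so tightly that the forbidden positions are governed by the anchors alone, and hence that the admissibility test is $O(1)$. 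Establishing this confinement is the crux of the argument; once it is in place, the complexity bookkeeping above is immediate.
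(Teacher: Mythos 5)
Your framework matches the paper's: the same table $f(i,j,l,t,s)$, the same degenerate base layer (one wedge collapsed to a single anchor, area of a triangle, $O(n^3)$ tuples), and the same accounting of $O(n^4)$ anchor tuples per layer, $O(n)$ candidates per tuple, and $k$ layers, giving $O(kn^5)$. The difference --- and the genuine gap --- lies in the recurrence, in exactly the step you yourself flag as ``the crux'' and then do not establish. You peel from \emph{both} wedges and take the maximum of the two inner maxima; the paper peels from exactly \emph{one} wedge, the one singled out by condition (2) of the double-wedge definition: if $p_t'$ lies clockwise between $p_i$ and $p_j$, then $f(i,j,l,t,s)=\max_m\{f(i,m,l,t,s-1)+A_{Op_mp_j}\}$ over $p_m$ clockwise between $p_i$ and $p_j$ with $p_m\neq p_l'$ and $p_m\neq p_t'$, and symmetrically in the other case (the two clauses of condition (2) are mutually exclusive, so the choice is well defined). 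This directed choice is not cosmetic: when $p_t'$ lies between $p_i$ and $p_j$, the antipode $p_j'$ of the re-attached vertex lies strictly inside the arc from $p_t$ to $p_i$, which meets neither wedge, so attaching $p_j$ to \emph{any} admissible $(s-1)$-double-wedge with anchors $(i,m,l,t)$ can never create a diameter; the only remaining risks involve the new anchor $p_m$ itself and are eliminated by the $O(1)$ exclusions $p_m\neq p_l',p_t'$. That is how the paper makes admissibility depend on the anchors alone.

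For your symmetric recurrence the confinement you hope for is actually false, so the gap cannot be closed by any anchor-based test. Again in the case where $p_t'$ lies clockwise between $p_i$ and $p_j$, your second inner maximum attaches $p_t$ to a sub-double-wedge with anchors $(i,j,l,u)$. Admissibility requires that $p_t'$ was not chosen as an \emph{interior} vertex of the wedge $(i,j)$ in that sub-solution; but $p_t'$ lies strictly inside the arc from $p_i$ to $p_j$, and the state $(i,j,l,u,s-1)$ carries no information about which interior points were used. The maximizing sub-configuration may well contain $p_t'$, in which case your recurrence records the area of a configuration containing the diameter $p_tp_t'$; this overestimate propagates upward, your claim that ``no spurious value is produced'' fails, and the final reported optimum need not be asymmetric at all (and missing such values cannot be dismissed as harmless, since non-asymmetric configurations can have strictly larger area). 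So the missing idea is precisely the paper's use of condition (2): it dictates which of the two wedges may be extended, guaranteeing that the extension vertex's antipode falls in the empty arc between the wedges and thereby reducing the admissibility check to a constant-time test on the anchors.
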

\begin{proof}
For $s=3$, if $i=j$ then $f(i,j,l,t,3)=A_{Op_lp_t}$ else $f(i,j,l,t,3)=A_{Op_ip_j}$. All possible values of $f(i,j,l,t,3)$ can be computed in time $O(n^3)$. For fixed values $i$, $j$, $l$, $t$ and $s$ ($s>3$), the value of $f(i,j,l,t,s)$ can be computed in time $O(n)$. If $p_t'$ lies clockwise between $p_i$ and $p_j$ then: $$f(i,j,l,t,s)=\max\{f(i,m,l,t,s-1) + A_{Op_mp_j}\}$$ 
for each $p_m$ between $p_i$ and $p_j$ in clockwise, $p_m\neq p_l'$ and $p_m\neq p_t'$. Otherwise, if $p_j'$ lies clockwise between $p_l$ and $p_t$ then: $$f(i,j,l,t,s)=\max\{f(i,j,l,m,s-1) + A_{Op_mp_t}\}$$ for each $p_m$ between $p_l$ and $p_t$ in clockwise, $p_m\neq p_i'$ and $p_m\neq p_j'$. In fact, all possible values of $f(i,j,l,t,s)$ with a fixed value $s$ ($s>3$) can be computed in time $O(n^5)$ having computed the values $f(i,j,l,t,s-1)$. Then, by iterating $s$ from $3$ to $k$, all possible values of $f(i,j,l,t,s)$ can be computed in time $O(kn^5)$.
\end{proof}

We now show how to improve this running time by computing all possible values of $f(i,j,l,t,s)$ for a fixed value $s>3$ in time $O(n^4)$. The following lemma is depicted in Figure~\ref{fig:ordered_i_r_q}.

\begin{figure}[h]
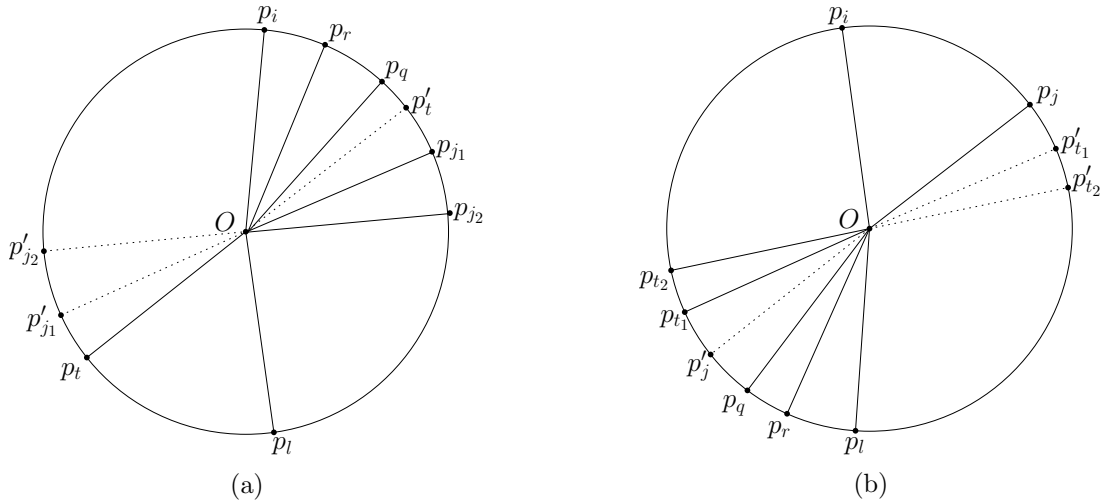

\centering
\begin{subfigure}{.5\textwidth}
\centering
\includegraphics[scale=.6, page=7]{double_wedge.pdf}
\caption{}
\label{fig:ordered_i_r_q}
\end{subfigure}%
\begin{subfigure}{.5\textwidth}
\centering
\includegraphics[scale=.6, page=10]{double_wedge.pdf}
\caption{}
\label{fig:ordered_l_r_q}
\end{subfigure}
\caption{The figures a) and b) show the statements of Lemmas \ref{thm:ordered_i_r_q} and \ref{thm:ordered_l_r_q}, respectively.}

\end{figure}

\begin{lemma}
\label{thm:ordered_i_r_q}
Let $f(i,j_1,l,t,s)$ and $f(i,j_2,l,t,s)$ be the areas of the maximum area $s$-double-wedges with anchors $(i,j_1,l,t)$ and $(i,j_2,l,t)$, respectively, such that $s>3$ and $p_t'$ lies clockwise between $p_i$ and $p_{j_1}$. 
Let $p_i,\dots,p_r,p_{j_1}$ and $p_i,\dots,p_q,p_{j_2}$ be the subsequences in $S$ of the wedges $(i,j_1)$ and $(i,j_2)$, respectively. 
If $p_{j_1}$ precedes $p_{j_2}$ when walking clockwise along the circle from $p_i$, then $p_r$ also precedes $p_q$.
\end{lemma}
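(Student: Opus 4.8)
The plan is to prove this monotonicity of the optimal decision point by contradiction, using a standard exchange argument that reduces the whole statement to the strict concavity of $\sin$ on $(0,\pi)$. The quantities $p_r$ and $p_q$ are, by definition, the maximizers in the recurrence for $f$: since $p_t'$ lies clockwise between $p_i$ and $p_{j_1}$ (hence, as $p_{j_1}$ precedes $p_{j_2}$, also between $p_i$ and $p_{j_2}$), for each anchor $j\in\{j_1,j_2\}$ we have $f(i,j,l,t,s)=\max_m\{f(i,m,l,t,s-1)+A_{Op_mp_j}\}$, and $p_r$, $p_q$ are the points immediately preceding $p_{j_1}$, $p_{j_2}$ in the respective optimal wedges. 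First I would assume, toward a contradiction, that $p_q$ strictly precedes $p_r$; combined with $p_r\prec p_{j_1}\prec p_{j_2}$, this forces the clockwise order $p_q\prec p_r\prec p_{j_1}\prec p_{j_2}$ from $p_i$. I would then observe that $p_q$ is an admissible choice of $p_m$ for the anchor $j_1$ (it lies strictly before $p_{j_1}$ and, being a vertex of the optimal wedge $(i,j_2)$, differs from $p_l'$ and $p_t'$), and symmetrically $p_r$ is admissible for $j_2$.

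The core of the exchange argument is to write down the two optimality inequalities, that $p_r$ beats $p_q$ at anchor $j_1$ and $p_q$ beats $p_r$ at anchor $j_2$:
\[
f(i,r,l,t,s-1)+A_{Op_rp_{j_1}}\ \ge\ f(i,q,l,t,s-1)+A_{Op_qp_{j_1}},
\]
\[
f(i,q,l,t,s-1)+A_{Op_qp_{j_2}}\ \ge\ f(i,r,l,t,s-1)+A_{Op_rp_{j_2}}.
\]
Adding these, the $(s-1)$-terms cancel and leave the single ``crossing'' inequality $A_{Op_rp_{j_1}}+A_{Op_qp_{j_2}}\ge A_{Op_qp_{j_1}}+A_{Op_rp_{j_2}}$. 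Writing $A_{Op_xp_y}=\tfrac12\sin(\beta_y-\beta_x)$, where $\beta_x$ is the clockwise angular position of $p_x$ measured from $p_i$, and setting $c=\beta_{j_1}-\beta_r$, $a=\beta_{j_1}-\beta_q$, $b=\beta_{j_2}-\beta_r$, $d=\beta_{j_2}-\beta_q$, the ordering yields $a+b=c+d$ with $c<a<d$ and $c<b<d$, and the crossing inequality becomes $\sin c+\sin d\ge \sin a+\sin b$.

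To finish, I would invoke strict concavity of $\sin$ on $(0,\pi)$. Putting $x=\beta_r-\beta_q>0$ and $y=\beta_{j_2}-\beta_{j_1}>0$, one has $a=c+x$, $b=c+y$, $d=c+x+y$, so strict concavity gives $\sin(c+x)+\sin(c+y)>\sin c+\sin(c+x+y)$, i.e.\ $\sin a+\sin b>\sin c+\sin d$ (the same second-difference phenomenon as in Lemma~\ref{lemma_swap}, here in its continuous form). This contradicts the crossing inequality, so $p_q$ cannot strictly precede $p_r$; hence $p_r$ precedes $p_q$, as claimed.

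The hard part will be justifying the hypothesis of the concavity step, namely that all four differences $a,b,c,d$ lie in $(0,\pi)$, where $\sin$ is strictly concave; since $d=\beta_{j_2}-\beta_q$ is the largest of them, this reduces to showing that the clockwise arc from $p_q$ to $p_{j_2}$ is shorter than a semicircle. I would derive this from the double-wedge structure: each wedge is the convex hull of $O$ together with an arc of points in which $O$ is a genuine apex vertex, so its angular span is strictly less than $\pi$, and $\beta_{j_2}-\beta_q$ is bounded above by the span of the wedge $(i,j_2)$. Making this span bound fully rigorous — invoking the no-diameter condition and the placement of $p_t'$ clockwise between $p_i$ and $p_j$ to rule out a wedge reaching past the antipode $p_i'$ — is the delicate point; once it is settled, the contradiction above goes through verbatim.
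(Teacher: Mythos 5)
Your proposal is correct and follows essentially the same route as the paper: you add the two optimality inequalities for the maximizers $p_r$ and $p_q$, cancel the $f(\cdot,\cdot,\cdot,\cdot,s-1)$ terms to get the crossing inequality $A_{Op_rp_{j_1}}+A_{Op_qp_{j_2}}\ge A_{Op_qp_{j_1}}+A_{Op_rp_{j_2}}$, and contradict it via strict concavity of $\sin$ on $(0,\pi)$, which is exactly the paper's sum-to-product/decreasing-cosine step (the paper works with angles $z,x,y$ and derives $\cos(x+\tfrac{y}{2})\le\cos(x+z+\tfrac{y}{2})$). The ``delicate point'' you flag --- that the full angle from $p_q$ to $p_{j_2}$ is less than $\pi$ --- is asserted by the paper without proof (``Because $z+x+y<\pi$\dots''), so your treatment is, if anything, more careful; for the record, the cleanest justification is that the two wedges of a double-wedge are disjoint and both contain $O$, which forces each wedge's angular span below $\pi$.
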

\begin{proof}
If $p_q$ lies clockwise between $p_{j_1}$ (including it) and $p_{j_2}$ we are done. Now, consider the case when $p_q$ lies clockwise between $p_i$ and $p_{j_1}$.
The subsequences $p_i,\dots,p_r,p_{j_1}$ and $p_i,\dots,p_q,p_{j_2}$ are part of the maximum area $s$-double-wedges $(i,j_1,l,t)$ and $(i,j_2,l,t)$, respectively. Therefore we have
\begin{eqnarray}
f(i,j_1,l,t,s) & =f(i,r,l,t,s-1)+A_{Op_rp_{j_1}}& \nonumber\\ 
&\geq f(i,m,l,t,s-1)+A_{Op_mp_{j_1}}&\text{ for each }p_m\text{ lies clockwise between }p_i\text{ and }p_{j_1}\label{eq:eq1}
\end{eqnarray}%
\begin{eqnarray}
f(i,j_2,l,t,s) & =f(i,q,l,t,s-1)+A_{Op_qp_{j_2}}&\nonumber\\
&\geq f(i,m,l,t,s-1)+A_{Op_mp_{j_2}}&\text{ for each }p_m\text{ lies clockwise between }p_i\text{ and }p_{j_2}
\label{eq:eq2}
\end{eqnarray}
From \ref{eq:eq1} and \ref{eq:eq2} we obtain the following inequalities:
$$f(i,r,l,t,s-1)+A_{Op_rp_{j_1}}\geq f(i,q,l,t,s-1)+A_{Op_qp_{j_1}}$$
$$f(i,r,l,t,s-1)+A_{Op_rp_{j_2}}\leq f(i,q,l,t,s-1)+A_{Op_qp_{j_2}}$$
adding both we obtain that
\begin{equation}
A_{Op_rp_{j_2}}-A_{Op_rp_{j_1}}\leq A_{Op_qp_{j_2}}-A_{Op_qp_{j_1}}
\label{eq:area_triang}
\end{equation}

Suppose that $p_q$ and $p_r$ are inverted. 
Let $z$, $x$ and $y$ be the angles between $p_q$, $p_r$, $p_{j_1}$ and $p_{j_2}$ (see Figure \ref{fig:implied_angles}), then Inequality~\ref{eq:area_triang} can be written as:
$$\sin(x+y)-\sin x \leq \sin(x+y+z)-\sin(z+x)$$
$$2\cos(x+\dfrac{y}{2})\sin(\dfrac{y}{2})\leq 2\cos(x+z+\dfrac{y}{2})\sin(\dfrac{y}{2})$$
\begin{equation}
\cos(x+\dfrac{y}{2})\leq \cos(x+z+\dfrac{y}{2})
\label{eq:area_contradiction}
\end{equation}
Because $z+x+y<\pi$ and $\cos(\cdot)$ is decreasing in the interval $(0, \pi)$, Inequality~\ref{eq:area_contradiction} yields a contradiction.

\begin{figure}[h!]
\centering
\includegraphics[scale=.6, page=9]{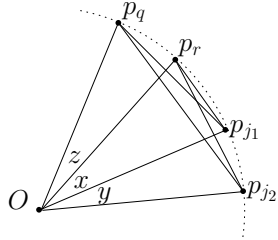}
\caption{This figure shows the triangles involved in the proof of Theorem~\ref{thm:ordered_i_r_q}.}
\label{fig:implied_angles}
\end{figure}
\end{proof}

The following lemma has an analogous proof to that of Lemma~\ref{thm:ordered_i_r_q} and is depicted in Figure~\ref{fig:ordered_l_r_q}.

\begin{lemma}
\label{thm:ordered_l_r_q}
Let $f(i,j,l,t_1,s)$ and $f(i,j,l,t_2,s)$ be the areas of the maximum area $s$-double-wedges with anchors $(i,j,l,t_1)$ and $(i,j,l,t_2)$, respectively, such that $s>3$ and $p_j'$ lies clockwise between $p_l$ and $p_{t_1}$. 
Let $p_l,\dots,p_r,p_{t_1}$ and $p_l,\dots,p_q,p_{t_2}$ be the subsequences in $S$ of the wedges $(l,t_1)$ and $(l,t_2)$, respectively. 
If $p_{t_1}$ precedes $p_{t_2}$ when walking clockwise along the circle from $p_l$, then $p_r$ also precedes $p_q$.
\end{lemma}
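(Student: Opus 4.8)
The plan is to carry over the argument of Lemma~\ref{thm:ordered_i_r_q} almost verbatim, exploiting the symmetry between the two wedges of a double-wedge: there the wedge $(l,t)$ was fixed and the anchor $j$ of the wedge $(i,j)$ was varied, whereas here the wedge $(i,j)$ is fixed and the anchor $t$ of the wedge $(l,t)$ is varied. First I would dispose of the easy case: if $p_q$ lies clockwise between $p_{t_1}$ (inclusive) and $p_{t_2}$, then $p_r$ precedes $p_q$ by construction and there is nothing to prove. I then assume, towards a contradiction, that $p_q$ lies strictly clockwise between $p_l$ and $p_{t_1}$, i.e. that $p_q$ and $p_r$ are inverted.

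Next I would invoke the optimality of the two maximum-area $s$-double-wedges together with the dynamic-programming recurrence that applies in the branch where $p_j'$ lies clockwise between $p_l$ and $p_t$, namely $f(i,j,l,t,s)=\max\{f(i,j,l,m,s-1)+A_{Op_mp_t}\}$ over all admissible $p_m$. This gives
\begin{eqnarray*}
f(i,j,l,t_1,s) & = f(i,j,l,r,s-1)+A_{Op_rp_{t_1}} & \\
& \geq f(i,j,l,m,s-1)+A_{Op_mp_{t_1}} & \text{ for all } p_m \text{ clockwise between } p_l \text{ and } p_{t_1},
\end{eqnarray*}
and the symmetric chain for $t_2$. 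Specializing the first chain to the point $p_q$ and the second to the point $p_r$, and adding the two resulting inequalities, the terms $f(i,j,l,r,s-1)$ and $f(i,j,l,q,s-1)$ cancel, leaving the area inequality
$$A_{Op_rp_{t_2}}-A_{Op_rp_{t_1}}\leq A_{Op_qp_{t_2}}-A_{Op_qp_{t_1}},$$
the exact analogue of Inequality~\ref{eq:area_triang}.

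Finally I would convert this into a trigonometric statement. Writing each triangle area as one half the sine of its central angle, and letting $z,x,y$ denote the angles subtended by the arcs between $p_q,p_r,p_{t_1},p_{t_2}$ in the orientation adapted to the second wedge (mirroring Figure~\ref{fig:implied_angles}), the product-to-sum identity $\sin\alpha-\sin\beta=2\cos\!\left(\tfrac{\alpha+\beta}{2}\right)\sin\!\left(\tfrac{\alpha-\beta}{2}\right)$ reduces the inequality to
$$\cos\!\left(x+\tfrac{y}{2}\right)\leq \cos\!\left(x+z+\tfrac{y}{2}\right),$$
precisely as in Inequality~\ref{eq:area_contradiction}. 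Since $x,y,z>0$ and $x+y+z<\pi$, and $\cos(\cdot)$ is strictly decreasing on $(0,\pi)$, this is impossible; hence the assumption that $p_q$ and $p_r$ are inverted fails, and $p_r$ precedes $p_q$.

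The main thing to get right is the bookkeeping of the clockwise orientation in the second wedge. I expect the only nontrivial step to be confirming that the recurrence for $f(i,j,l,t,s)$ in the branch "$p_j'$ lies clockwise between $p_l$ and $p_t$" ranges over exactly the right candidate points $p_m$, and that the induced angle configuration yields the cosine inequality above rather than its reverse. Once these signs are matched to those of Lemma~\ref{thm:ordered_i_r_q}, the contradiction is literally the same, which is why the two proofs are genuinely analogous.
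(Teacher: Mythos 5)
Your proof is correct and takes exactly the approach the paper intends: the paper gives no separate argument for Lemma~\ref{thm:ordered_l_r_q}, stating only that its proof is analogous to that of Lemma~\ref{thm:ordered_i_r_q}, and your write-up carries out that analogy faithfully (swapping the roles of the wedges $(i,j)$ and $(l,t)$, invoking the recurrence in the branch where $p_j'$ lies clockwise between $p_l$ and $p_t$, and arriving at the same cosine contradiction). Nothing essential is missing.
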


\begin{theorem}
Let $S:=\{p_0,p_0', p_1,p_1',\dots,p_{n-1},p_{n-1}'\}$ be the set of the endpoints of $n$ diameters in the unit circle and let $k$ be an integer such that $k<n$. The asymmetric $k$-gon of maximum area can be found in time $O(kn^4)$.
\end{theorem}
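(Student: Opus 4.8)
The plan is to combine the structural decomposition of Lemma~\ref{lemma:Double wedge and two triangles} with the monotonicity Lemmas~\ref{thm:ordered_i_r_q} and~\ref{thm:ordered_l_r_q} to speed up the dynamic program already described. By Lemma~\ref{lemma:Double wedge and two triangles}, an optimal asymmetric $k$-gon is a $k$-double-wedge $(i,j,l,t)$ together with the triangles $Op_jp_l$ and $Op_ip_t$, so it suffices to compute $f(i,j,l,t,k)$ for every tuple of anchors and then return the maximum of $f(i,j,l,t,k)+A_{Op_jp_l}+A_{Op_ip_t}$, which costs $O(n^4)$ once the table of $f$-values is available. The table is filled by the two recurrences already displayed, iterating $s$ from $3$ to $k$, with the base case $s=3$ computed in $O(n^3)$. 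Thus the theorem reduces to showing that, for each fixed $s>3$, all values $f(i,j,l,t,s)$ can be obtained in $O(n^4)$ time, since summing over the $k$ levels then gives $O(kn^4)$.

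First I would isolate a single level $s$ and the first of the two recurrences (the one applied when $p_t'$ lies clockwise between $p_i$ and $p_j$; the other case is symmetric via Lemma~\ref{thm:ordered_l_r_q}). The naive evaluation spends $O(n)$ per state, i.e. $O(n^5)$ per level, because each $f(i,j,l,t,s)$ maximizes $f(i,m,l,t,s-1)+A_{Op_mp_j}$ over all admissible predecessors $p_m$. The key is to fix the three anchors $i,l,t$ and treat $f(i,j,l,t,s)$ as a function of the remaining anchor $j$. Arrange the quantities $M[j][m]:=f(i,m,l,t,s-1)+A_{Op_mp_j}$ as a matrix whose rows are indexed by the admissible values of $j$ and whose columns are indexed by the candidate predecessors $m$, deleting the two forbidden columns $m=l'$ and $m=t'$ and restricting each row to the staircase of points $p_m$ lying clockwise between $p_i$ and $p_j$. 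Computing all $f(i,j,l,t,s)$ for this fixed triple is then exactly computing the maximum entry of every row of $M$.

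Now, Lemma~\ref{thm:ordered_i_r_q} states precisely that the position of the row maximum is monotone: as $j$ advances clockwise, the optimal predecessor $p_r$ advances as well. Moreover, Inequality~\ref{eq:area_triang} extracted in that proof is a quadrangle (Monge-type) inequality, and the separable terms $f(i,\cdot,l,t,s-1)$ cancel from it, so $M$ is in fact totally monotone. Consequently the matrix-searching technique for (staircase) totally monotone matrices computes all $n$ row maxima in $O(n)$ time, rather than the $O(n^2)$ of the naive scan. Summing this $O(n)$ cost over the $O(n^3)$ anchor triples $(i,l,t)$ yields $O(n^4)$ for one recurrence at level $s$, and the symmetric recurrence is handled identically using Lemma~\ref{thm:ordered_l_r_q}. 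Hence each level costs $O(n^4)$; together with the $O(n^3)$ base case and the $O(n^4)$ final combination, the overall running time is $O(kn^4)$.

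I expect the main obstacle to be justifying the passage from ``monotone argmax'' to genuine amortized $O(1)$ work per state: monotonicity of the optimum alone would only give an $O(\log n)$ factor through divide and conquer, and the sharper $O(n)$ per triple needs the full total monotonicity guaranteed by the quadrangle inequality, together with the observation that deleting the two forbidden antipodal columns and respecting the growing staircase domain both preserve total monotonicity. Verifying these structural hypotheses of the matrix-searching procedure, rather than proving any new inequality, is where the real work lies.
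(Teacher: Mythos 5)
Your proposal is correct and shares the paper's entire architecture --- the reduction via Lemma~\ref{lemma:Double wedge and two triangles}, the table $f(i,j,l,t,s)$, the two recurrences, the $O(n^3)$ base case, and the final $O(n^4)$ combination step --- but it takes a genuinely different route at the one step that actually carries the complexity bound: computing, for fixed $(i,l,t)$ and level $s$, all values $f(i,j,l,t,s)$ in $O(n)$ total time. The paper does this with an elementary sweep: having recorded the optimal predecessor $p_r$ for the current anchor $p_j$, it evaluates $f(i,j+1,l,t,s)$ by scanning only the candidates from $p_r$ to $p_{j+1}$, justified by the argmax monotonicity of Lemma~\ref{thm:ordered_i_r_q}. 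You instead cast the problem as row maxima of the staircase matrix $M[j][m]=f(i,m,l,t,s-1)+A_{Op_mp_j}$, observe that what the proof of Lemma~\ref{thm:ordered_i_r_q} really establishes (the geometric refutation of Inequality~\ref{eq:area_triang}) is a strict quadrangle inequality on the areas $A_{Op_mp_j}$ that survives the addition of the $j$-independent terms, and run SMAWK-type matrix searching.

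Your heavier machinery buys something real, because the paper's amortization is not actually implied by argmax monotonicity. Monotonicity only says $r(j)$ never moves backwards; it does not make the sweep's cost telescope. Already at $s=4$ (with wedge $(l,t)$ a single point), the best predecessor for anchor $p_j$ is essentially the angular midpoint of the arc from $p_i$ to $p_j$, so $r(j)\approx(i+j)/2$; the scan from $p_{r(j)}$ to $p_{j+1}$ then costs $\Theta(j-i)$ per row, i.e.\ $\Theta(n^2)$ for a single triple, and the paper's argument as written only supports $O(kn^5)$. Your diagnosis --- that monotone argmax alone yields only $O(n\log n)$ per triple by divide and conquer, and that genuine $O(n)$ needs total monotonicity plus matrix searching --- is exactly right, and it simultaneously identifies and repairs this gap in the paper's own proof. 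The one point you leave open, that the growing staircase domain and the two deleted columns ($m=l'$ and $m=t'$) preserve total monotonicity, does work out: the forbidden columns are forbidden in every row and may be deleted outright, and the invalid upper-right region can be padded with steeply decreasing dummy values so that SMAWK applies to a full totally monotone matrix; alternatively, staircase matrix searching (Klawe--Kleitman) gives $O(n\alpha(n))$ per triple, which yields the marginally weaker $O(kn^4\alpha(n))$, so the padding route is the one that delivers the clean $O(kn^4)$. Spelling that padding argument out is all that remains to make your proof complete.
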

\begin{proof}
Having computed $f(i,j,l,t,k)$ for all possible $k$-double-wedge, the maximum area asymmetric $k$-gon can be found in time $O(n^4)$. We show how to compute $f(i,j,l,t,s)$ for all $s$-double-wedge with $3\leq s \leq k$ in time $O(kn^4)$.

For $s=3$, $f(i,j,l,t,3)$ can be computed in time $O(n^3)$. For $s>3$, if $p_t'$ lies clockwise between $p_i$ and $p_j$, $f(i,j,l,t,s)=\max\{f(i,m,l,t,s-1) + A_{Op_mp_j}\}$, for each $p_m$ between $p_i$ and $p_j$ in clockwise, $p_m\neq p_l'$ and $p_m\neq p_t'$. Suppose that we have computed for $i$, $j$, $l$ and $t$ the value $r$ such that $f(i,j,l,t,s)=f(i,r,l,t,s-1) + A_{Op_rp_j}$. Then, to compute $f(i,j+1,l,t,s)$ we do not need test each $p_m$ between $p_i$ and $p_{j+1}$, we need only to test each $p_m$ in the interval from $p_r$ to $p_{j+1}$. In fact, we can compute $f(i,j,l,t,s)$ for all $p_j$ that follows $p_t'$ in clockwise for fixed values $i$, $l$, $t$ and $s$ in time $O(n)$. The other case, when $p_j'$ lies clockwise between $p_l$ and $p_t$ is analogous. In fact, all possible values of $f(i,j,l,t,s)$ with a fixed value $s$ ($s>3$) can be computed in time $O(n^4)$ having computed the values $f(i,j,l,t,s-1)$. Then, iterating $s$ from $3$ to $k$, all possible values of $f(i,j,l,t,s)$ can be computed in time $O(kn^4)$.
\end{proof}

\subsection{Special case $k=3$}
Let $S:=\{p_0, p_0', p_1, p_1', \dots, p_{n-1}, p_{n-1}'\}$ be the set of endpoints of $n$ diameters in the unit circle, $n\geq 3$.
In the following, we prove that the triangle of maximum area on $S$ contains no diameter. Using this characterization, we show how to find the largest triangle in linear time. We use $\arc{p_ip_j}$ to denote the clockwise arc from $p_i$ to $p_j$ and $\angle{p_ip_jp_k}$ to denote the inner angle of the triangle $p_ip_jp_k$ with apex $p_j$.

\begin{lemma}
A maximal triangle with vertices in $S$ contains no diameter.
\end{lemma}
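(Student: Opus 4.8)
The plan is to argue by contradiction. Suppose the maximum-area triangle $T$ on $S$ contains a diameter, and write its vertices as $p_i$, $p_i'$, $p_j$, where $p_ip_i'$ is a diameter and $p_j$ lies on one of the two open semicircles it bounds, say the ``upper'' one. I would place $p_i$ at angle $0$, $p_i'$ at angle $\pi$ and $p_j$ at angle $\theta\in(0,\pi)$, so that the antipode $p_j'$ sits at angle $\theta+\pi$ in the open lower semicircle. The whole strategy is to exhibit, using a single extra endpoint lying in that lower semicircle, a triangle on $S$ of strictly larger area than $T$, contradicting maximality.

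First I would do the counting that guarantees such an extra point exists. The diameter $p_ip_i'$ contributes no endpoint to the \emph{open} lower semicircle, whereas every other diameter has exactly one endpoint in each open semicircle (its two endpoints being antipodal, none of them can coincide with $p_i$ or $p_i'$, and no other diameter is horizontal). Hence the open lower semicircle contains exactly $n-1$ points of $S$; one of them is $p_j'$, so because $n\ge 3$ there is a further point $p_m\neq p_j'$ there, which is automatically distinct from $p_i,p_i',p_j$.

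The heart of the proof is a symmetry-plus-unimodality argument showing that $p_m$ yields a strictly larger triangle. Fixing the chord $p_ip_j$ as a base, as an apex $x$ travels along the major arc of this chord the height of $x$ above the line $p_ip_j$ is strictly unimodal, attaining its maximum at the midpoint of that arc. The reflection $\sigma$ across the perpendicular bisector of $p_ip_j$ (a line through the centre $O$) fixes the line $p_ip_j$, swaps $p_i\leftrightarrow p_j$, and being linear also swaps the antipodes $p_i'\leftrightarrow p_j'$; consequently $p_i'$ and $p_j'$ lie symmetrically about that arc midpoint and have equal height, which is exactly why all four ``half-rectangle'' triangles on $\{p_i,p_i',p_j,p_j'\}$ tie. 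Therefore any point strictly between $p_i'$ and $p_j'$ on the major arc is strictly closer to the midpoint and gives a triangle $p_ip_jx$ strictly taller than $T$. Running the same argument with base $p_i'p_j$ shows that any point strictly between $p_j'$ and $p_i$ gives a triangle $p_i'p_jx$ strictly larger than $T$. Since these two open arcs together with the single point $p_j'$ exhaust the open lower semicircle, the point $p_m$ falls in one of them, producing the required larger triangle and the contradiction.

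I expect the main obstacle to be making the ``strictly taller'' step rigorous, i.e.\ the strict unimodality and symmetry of the height function along the arc; this is where a monotonicity estimate in the spirit of Lemma~\ref{lemma_swap} (ultimately the fact that $\cos$ is strictly decreasing on $(0,\pi)$) is needed, after writing the height as an affine function of the sine of the apex angle. The remaining points require only care rather than ideas: verifying that $p_i'$, $p_j'$ (resp.\ $p_i$, $p_j'$) genuinely lie on the relevant major arc for every $\theta\in(0,\pi)$, and that the two open sub-arcs together with $p_j'$ cover the lower semicircle, so that no position of $p_m$ can escape the argument.
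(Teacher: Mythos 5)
Your proof is correct and follows essentially the same route as the paper's: assume the maximal triangle $p_ip_jp_i'$ contains a diameter, locate an endpoint of some other diameter on the arc strictly between $p_i'$ and $p_j'$ (or between $p_j'$ and $p_i$), and conclude by a common-base, strictly-larger-height comparison. Your write-up is in fact slightly more careful than the paper's, which merely asserts the height inequality from a figure and tacitly assumes the auxiliary diameter is distinct from $p_jp_j'$, a point your counting of the $n-1\ge 2$ points in the open lower semicircle handles explicitly.
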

\begin{proof}
Suppose for a contradiction that the maximal triangle contains a diameter. Let $p_ip_jp_i'$ be the maximal triangle. Let $p_l$ and $p_l'$ be another diameter of points in $S$. If $p_l$ is in $\arc{p_i p_j}$ then $p_l'$ is in $\arc{p_i' p_j'}$. The triangles $p_ip_jp_i'$ and $p_ip_jp_l'$ have a common base, the segment $p_i p_j'$, the height of $p_ip_jp_i'$ from $p_i'$ is smaller than the height of $p_ip_jp_l'$ from $p_l'$; see Figure~\ref{fig:no_diameter}. Therefore, $A_{p_ip_jp_l'}>A_{p_ip_jp_i'}$ which is a contradiction. The proof in the case $p_l$ is in $\arc{p_j p_i'}$ is analogous.
\end{proof}

\begin{figure}[h!]
\centering
\includegraphics[scale=0.525]{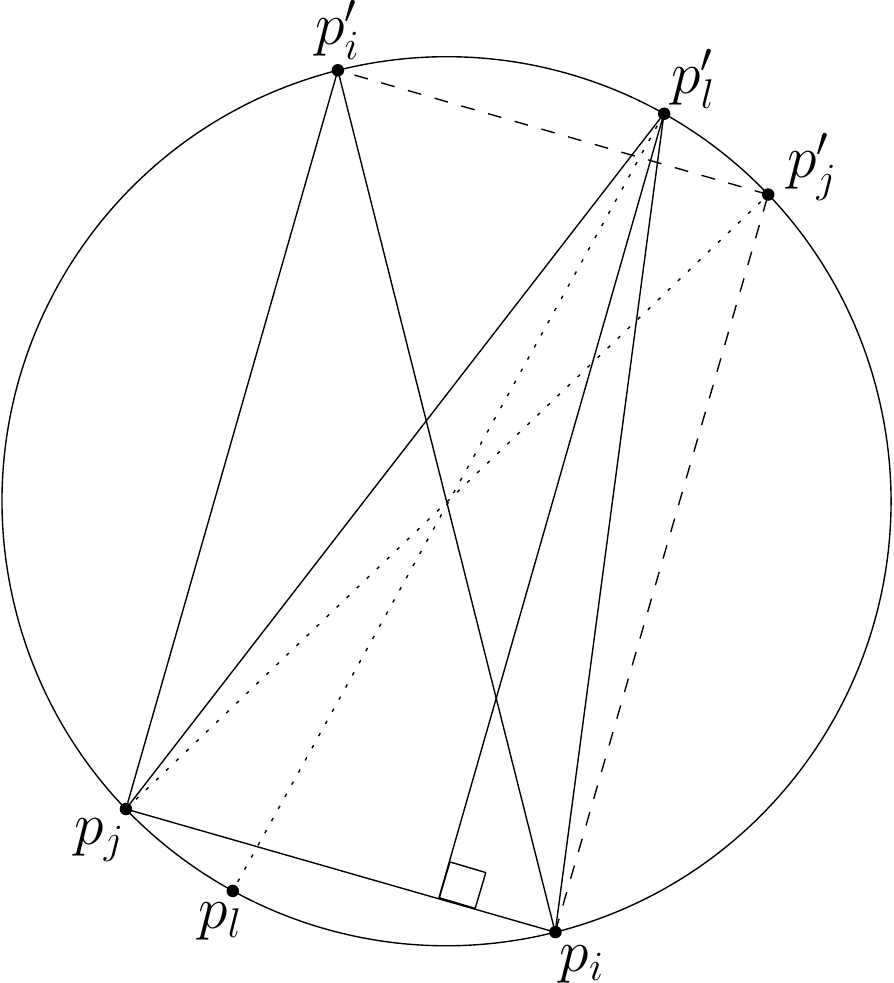}
\caption{$A_{p_l'p_jp_i}>A_{p_ip_i'p_j}$ because the height of $p_l'p_jp_i$ from $p_l'$ is greater than the height of $p_ip_i'p_j$ from $p_i'$.}
\label{fig:no_diameter}
\end{figure}

Note that each triangle contains either two angles greater or equal than $2\pi/3$, or two angles smaller or equal than $2\pi/3$; we call such pair of angles the \emph{base angles} of the triangle. A \emph{critical vertex} is a vertex supporting a base angle. The following lemma is key to our algorithm.

\begin{lemma}
\label{thm:anchored}
Let $c_1$, $c_2$, $p$, $b_2$ and $b_1$ be points in the unit circle that lie in this order when walking counterclockwise along the circle from $c_1$.
\begin{enumerate}
\item If $\angle pb_1c_1\leq2\pi/3$ and $\angle pc_1b_1\leq2\pi/3$, then the maximal triangle that contains $p$ is $c_1pb_1$
\item If $\angle pb_2c_2\geq 2\pi/3$ and $\angle pc_2b_2\geq 2\pi/3$, then the maximal triangle that contains $p$ is $c_2pb_2$
\end{enumerate}

\end{lemma}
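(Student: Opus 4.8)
The plan is to reduce the lemma to a one–parameter monotonicity statement for the area of an inscribed triangle with a fixed vertex. First I would fix $p$ and consider a competing triangle $LpR$ whose other two vertices $L$ and $R$ lie on opposite sides of $p$ (one on each of the two arcs into which $p$ and its antipode split the circle); a triangle with all three vertices in a common semicircle is never maximal, so the maximizer has this form. Writing $u=\arc{pL}$ and $v=\arc{pR}$ for the two arcs, the interior angles are $v/2$ at $L$, $u/2$ at $R$ and $\pi-\tfrac{u+v}{2}$ at $p$, so the area is
$$g(u,v)=2\sin\tfrac{u}{2}\,\sin\tfrac{v}{2}\,\sin\tfrac{u+v}{2}.$$

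The heart of the argument is the sign of the partial derivatives of $g$. A short computation gives
$$\frac{\partial g}{\partial u}=\sin\tfrac{v}{2}\,\sin\!\Big(u+\tfrac{v}{2}\Big),$$
and symmetrically for $v$. Hence, with the other arc held fixed, $g$ increases while $u+\tfrac{v}{2}<\pi$ and decreases while $u+\tfrac{v}{2}>\pi$ (and likewise with $u$ and $v$ exchanged): sliding one free vertex outward raises the area exactly until the triangle passes the equilateral configuration. I would also record the equivalent discrete version — comparing two candidate vertices on the same arc — so that the conclusion applies to the finitely many points of $S$; this mirrors the swap argument of Lemma~\ref{lemma_swap}.

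Next I would translate the angular hypotheses into arc conditions through the inscribed angle theorem (recall a base angle is half the arc it subtends): $\angle p b_1 c_1$ equals half of $\arc{pc_1}$ and $\angle p c_1 b_1$ equals half of $\arc{pb_1}$, so the hypothesis of item (1) places both arcs $\arc{pb_1}$ and $\arc{pc_1}$ at most the equilateral arc $2\pi/3$, while the hypothesis of item (2) places $\arc{pb_2}$ and $\arc{pc_2}$ each at least $2\pi/3$. In case (1) every admissible competitor then satisfies $u\le 2\pi/3$ and $v\le 2\pi/3$, whence $u+\tfrac{v}{2}\le\pi$ and $v+\tfrac{u}{2}\le\pi$ hold simultaneously; by the monotonicity both partials are nonnegative over the whole admissible range, so $g$ grows as the two arcs grow, and the maximum is attained at the outermost points $c_1$ and $b_1$. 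Case (2) is symmetric: both arcs are at least $2\pi/3$, both partials are nonpositive, and the optimum is the innermost pair $c_2$ and $b_2$.

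I expect the main obstacle to be precisely this joint, two–variable step, since the sign of $\partial g/\partial u$ depends on $v$ (the critical locus is the curve $u+\tfrac{v}{2}=\pi$, not a horizontal line), so coordinatewise monotonicity is not automatic. The point is that the arc bounds extracted from the base–angle hypotheses are exactly strong enough to confine all competitors to one side of \emph{both} critical curves $u+\tfrac{v}{2}=\pi$ and $v+\tfrac{u}{2}=\pi$ at once, which is what lets me push both free vertices in the same direction without the gradient changing sign. A secondary point is the opening reduction — that a maximal triangle really does place its two non-$p$ vertices on opposite arcs of $p$ — which I would dispatch with the same height–comparison idea used to prove that a maximal triangle contains no diameter.
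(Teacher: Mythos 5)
Your core argument is correct and is essentially an analytic rendering of the paper's own proof. The paper also proceeds one vertex at a time, establishing the chains $A_{pc_2b_2}<A_{pc_2b_1}<A_{pc_1b_1}$ and $A_{pc_2b_2}<A_{pc_1b_2}<A_{pc_1b_1}$ (and the reversed chains for item (2)) by comparing heights via similar triangles; your partial derivative $\partial g/\partial u=\sin(v/2)\sin\left(u+v/2\right)$ is exactly the continuous form of those single-vertex moves, and it has the merit of making explicit the monotonicity region (one side of the two critical curves $u+v/2=\pi$ and $v+u/2=\pi$) that the paper leaves implicit in its figures.

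The one step you must repair is the translation of the hypotheses, which as written is a non sequitur with a factor of two. From ``$\angle pb_1c_1$ equals half of $\arc{pc_1}$'' and $\angle pb_1c_1\le 2\pi/3$ you may conclude only $\arc{pc_1}\le 4\pi/3$, not $\arc{pc_1}\le 2\pi/3$; and under the weaker bound both your argument and the lemma itself fail: take $\arc{pc_1}=4\pi/3-\eps$ and $\arc{pb_1}=2\pi/3$, so that the interior angles at $b_1$ and $c_1$ are at most $2\pi/3$, yet $c_1pb_1$ is nearly degenerate and is beaten by the inner equilateral competitor with both arcs equal to $2\pi/3$. The lemma is true only under the reading in which the bounds $2\pi/3$ refer to the subtended arcs (central angles), equivalently to interior angles compared with $\pi/3$; this is also the only reading under which item (2) is non-vacuous (two interior angles of a triangle can never both be at least $2\pi/3$), and the only one consistent with how the lemma is applied in the theorem that follows, whose conditions (\ref{eq_ji})--(\ref{eq_mi}) bound arcs by $2\pi/3$ and $4\pi/3$. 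Since the conditions you actually feed into the monotonicity argument are the correct ones ($u,v\le 2\pi/3$ for item (1), resp.\ $u,v\ge 2\pi/3$ for item (2)), your proof goes through once you state this reading of the hypotheses explicitly instead of invoking the inscribed-angle theorem.
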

\begin{proof}
Using similar triangles we can show the following inequalities for the first case (see Figure~\ref{fig:anchored_over}): $$A_{pc_2b_2}<A_{pc_2b_1}<A_{pc_1b_1}$$
$$A_{pc_2b_2}<A_{pc_1b_2}<A_{pc_1b_1}\ ,$$
and these ones for the other case (see Figure~\ref{fig:anchored_under}):
$$A_{ pc_2b_2}>A_{ pc_2b_1}>A_{ pc_1b_1}$$
$$A_{ pc_2b_2}>A_{ pc_1b_2}>A_{ pc_1b_1}\ .$$
\end{proof}

\begin{figure}[h]
\centering
\begin{subfigure}{.5\textwidth}
\centering
\includegraphics[scale=.7]{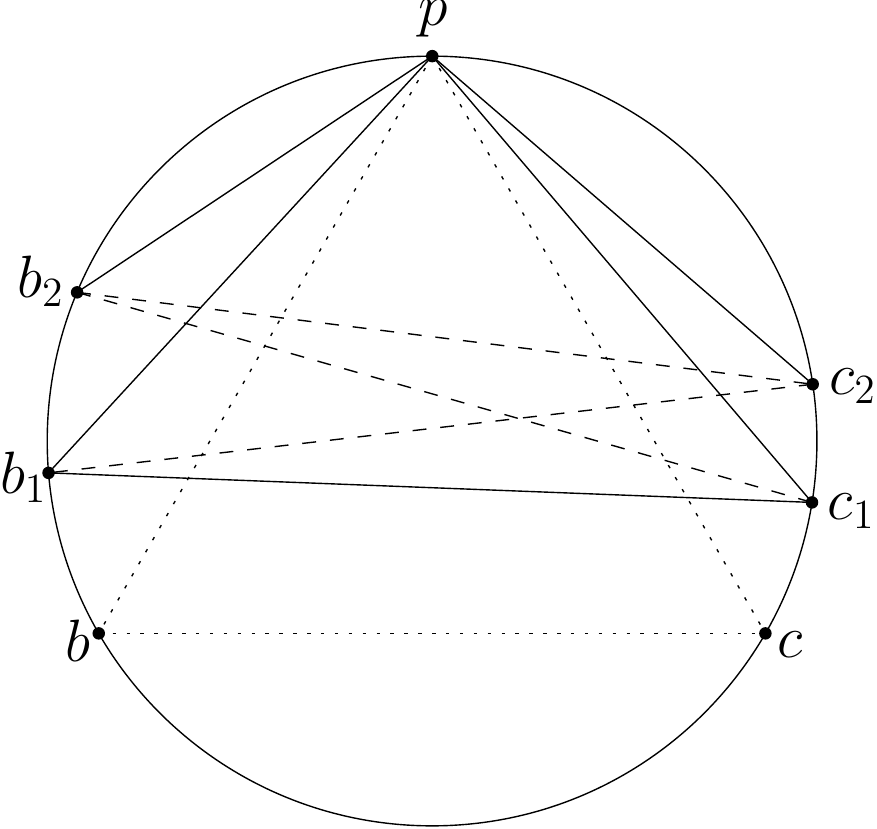}
\caption{}
\label{fig:anchored_over}
\end{subfigure}%
\begin{subfigure}{.5\textwidth}
\centering
\includegraphics[scale=.7]{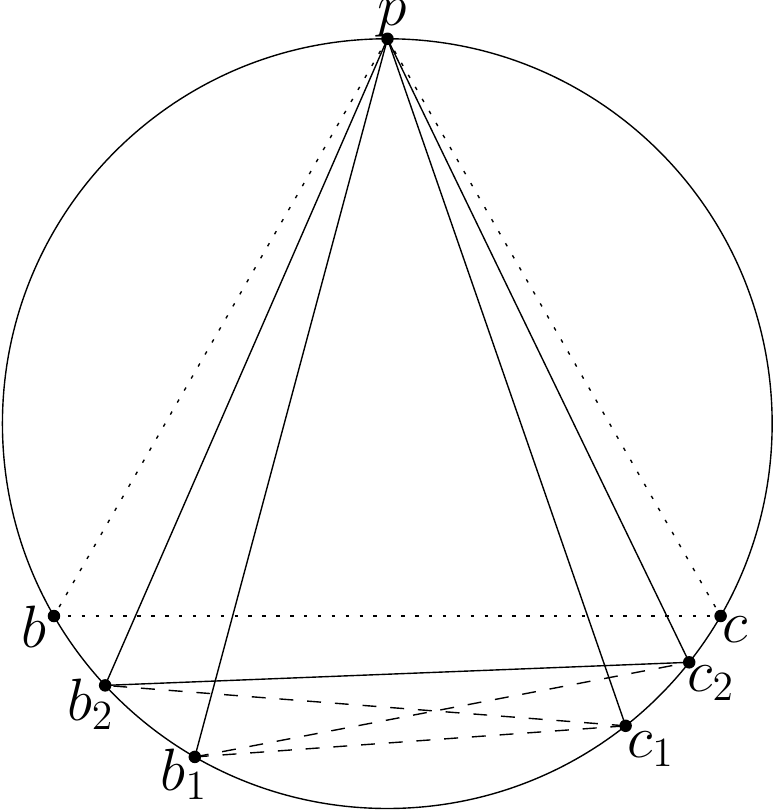}
\caption{}
\label{fig:anchored_under}
\end{subfigure}%
\caption{This figure shows triangles with $p$ as critical vertex. In both cases the maximal triangle is the closest to the equilateral triangle $pbc$.}
\end{figure}

\begin{theorem}
Let $S:=\{p_0, p_0', p_1, p_1', \dots, p_{n-1}, p_{n-1}'\}$ be the set of endpoints of $n$ diameters in the unit circle, $n\geq 3$. The triangle of maximum area with vertices in $S$ can be found in linear time.
\end{theorem}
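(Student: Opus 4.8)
The plan is to reduce the problem to finding, for each point $p \in S$, the maximum-area triangle having $p$ as its apex, and then to show that these per-apex optima can be computed collectively in linear time by a monotone sweep. First I would invoke the preceding lemma: since a maximal triangle with vertices in $S$ contains no diameter, the global maximum-area triangle is automatically asymmetric, so for $k=3$ the asymmetry constraint of Problem~\ref{problem: Main problem} comes for free and it suffices to locate the largest triangle with vertices in $S$.

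Next I would use the base-angle dichotomy together with Lemma~\ref{thm:anchored} to characterize, for a fixed apex $p$, the optimal base. Every triangle carries its pair of base angles at two critical vertices and has a single apex opposite them, so ranging $p$ over all $2n$ points, and over the two cases of Lemma~\ref{thm:anchored}, is guaranteed to encounter the global optimum. For a fixed $p$, Lemma~\ref{thm:anchored} dictates exactly how to choose the partners $b$ and $c$ on the two arcs flanking $p$: in the small-angle case I push $b$ and $c$ outward, away from $p$, as far as the base angles remain $\le 2\pi/3$, and in the large-angle case I pull them inward as far as the base angles remain $\ge 2\pi/3$. In both cases the optimal partners are those straddling the threshold, i.e. the triangle on $p$ that is closest to equilateral.

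The crux is the linear-time bound, and here I would exploit monotonicity. As the apex $p$ advances one step counterclockwise along $S$, the threshold condition of Lemma~\ref{thm:anchored} forces the optimal left partner $c(p)$ and right partner $b(p)$ to advance counterclockwise as well, never regressing. Consequently I can maintain two pointers on the cyclic sequence $S$ and advance them monotonically while $p$ sweeps once around the circle; each pointer traverses only $O(n)$ positions in total, so all per-apex optima are produced in $O(n)$ aggregate time, after which a single pass returns the overall maximum. I would precompute the arc positions of the points so that each inscribed-angle comparison, and each triangle-area evaluation, costs $O(1)$.

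I expect the main obstacle to be rigorously establishing the monotonicity of the witnesses $b(p)$ and $c(p)$ as $p$ rotates, namely proving that the threshold-straddling partners never move backward when the apex advances; this is precisely what legitimizes the amortized $O(1)$-per-apex sweep and hence the linear running time. A secondary but necessary verification is that the two cases of Lemma~\ref{thm:anchored}, read through the base-angle dichotomy, jointly cover every triangle, so that no candidate for the global maximum is omitted by the enumeration over apices.
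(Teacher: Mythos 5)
Your proposal follows essentially the same route as the paper: reduce to the largest (automatically diameter-free, hence asymmetric) triangle, fix each point of $S$ as the distinguished vertex, use Lemma~\ref{thm:anchored} to cut the candidates down to the two threshold-straddling triangles nearest the equilateral one, and exploit the monotone advance of those threshold indices (the paper's $j_i,k_i,l_i,m_i$) to get an amortized linear sweep. The only cosmetic difference is that you phrase the sweep with two pointers where the paper maintains four (one pair per case of Lemma~\ref{thm:anchored}), and the monotonicity you flag as the main obstacle is immediate from the fact that the witnesses are defined by fixed arc-distance thresholds ($2\pi/3$ and $4\pi/3$) from the advancing vertex, which is exactly how the paper dispatches it.
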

\begin{proof}The idea of our algorithms is to iterate among the endpoints of the $n$ diameters in clockwise order computing for each $p_i$ the maximal triangle with $p_i$ as critical vertex. Let $M$ be the triangle of maximum area with vertices in $S$. Then $M=\max\{T_i\}$, where $T_i$ is the maximal triangle with $p_i$ as critical vertex. Lemma~\ref{thm:anchored} allows us to find $T_i$ comparing just two triangles $p_ip_{j_i}p_{m_i}$ and $p_ip_{k_i}p_{l_i}$ (see Figure~\ref{fig:max_cands}), where $j_i\leq k_i < l_i\leq m_i$ and the following inequalities hold: 
\begin{eqnarray}
\arc{p_ip_{j_i}}\leq 2\pi/3<\arc{p_ip_{{j_i}+1}}
\label{eq_ji}\\
\arc{p_ip_{k_i}}\geq 2\pi/3>\arc{p_ip_{{k_i}-1}}
\label{eq_ki}\\
\arc{p_ip_{l_i}}\leq 4\pi/3<\arc{p_ip_{{l_i}+1}}
\label{eq_li}\\
\arc{p_ip_{m_i}}\geq 4\pi/3>\arc{p_ip_{{m_i}-1}}
\label{eq_mi}
\end{eqnarray}
%From~\ref{eq_ji} and~\ref{eq_ki} we obtain that $\arc{P_iP_{j_i}} < 2\pi/3 < \arc{P_iP_{k_i}}$ and $j_i<k_i$ or $\arc{P_iP_{j_i}} = \arc{P_iP_{k_i}} = 2\pi/3$ and $j_i=k_i$. %From~\ref{eq_li} and~\ref{eq_mi} we obtain that $\arc{P_iP_{l_i}} < 4\pi/3 < \arc{P_iP_{m_i}}$ and $l_i<m_i$ or $\arc{P_iP_{l_i}} = \arc{P_iP_{m_i}} = 4\pi/3$ and $l_i=m_i$. 
Note that $j_{i+1}\geq j_i$, $k_{i+1}\geq k_i$, $l_{i+1}\geq l_i$ and $m_{i+1}\geq m_i$.
If we have for a point $p_i$ the points $p_{j_i}$, $p_{k_i}$, $p_{l_i}$ and $p_{m_i}$, then moving from these points in clockwise we can find for $p_{i+1}$ the points $p_{j_{i+1}}$, $p_{k_{i+1}}$, $p_{l_{i+1}}$ and $p_{m_{i+1}}$. Therefore, we can compute the maximal triangle with $p_i$ as critical vertex for all $p_i$ in $S$ in linear time.
\end{proof}

\begin{figure}
\centering
\includegraphics[scale=.7]{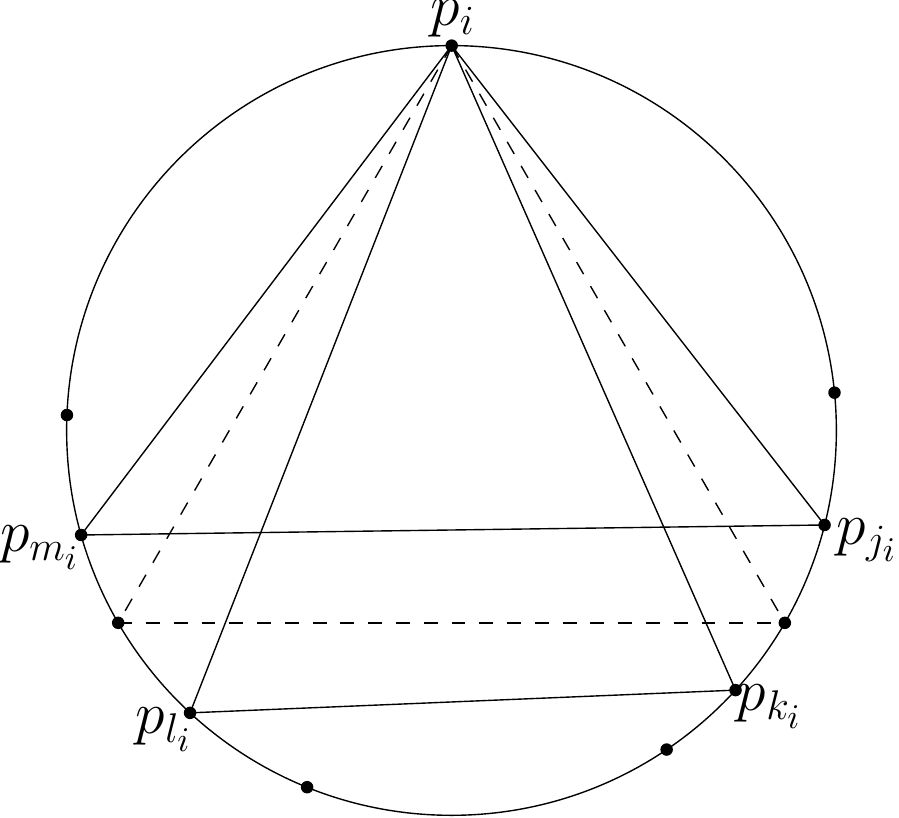}
\caption{The two candidates to be the maximal triangles with critical vertex $p_i$, $p_ip_{j_i}p_{m_i}$ and $p_ip_{k_i}p_{l_i}$.}
\label{fig:max_cands}
\end{figure}
 
\subsection{Special case $k=4$}
Unlike the triangle, the quadrilateral of maximum area could contain diameters. However, we prove that the asymmetric quadrilateral of maximum area satisfies a property that allows us to find it in linear time.

\begin{lemma}
\label{thm:k=4}
Let $S:=\{p_0, p_0', p_1, p_1', \dots, p_{n-1}, p_{n-1}'\}$ be the set of endpoints of $n$ diameters in the unit circle, $n\geq 4$. Let $p_i$, $p_l$, $p_j'$ and $p_t'$ be the vertices of a maximal quadrilateral. Then the arcs $\arc{p_ip_j}$, $\arc{p_lp_t}$, $\arc{p_i'p_j'}$ and $\arc{p_l'p_t'}$ contain no other points of $S$, see Figure~\ref{fig:k=4}.
\end{lemma}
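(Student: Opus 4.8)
The plan is to argue by an exchange (local‑improvement) argument: assuming the maximal quadrilateral $Q=p_ip_lp_j'p_t'$ (taken clockwise) has one of the four arcs non‑empty, I would produce an asymmetric quadrilateral of strictly larger area, contradicting maximality. Before starting I would record the structural facts about $Q$ that the argument needs, all of which follow from its being a maximum‑area asymmetric quadrilateral (and from the double‑wedge description in Lemma~\ref{lemma:Double wedge and two triangles}). Writing the four side‑arcs as $s_1=\arc{p_ip_l}$, $s_2=\arc{p_lp_j'}$, $s_3=\arc{p_j'p_t'}$, $s_4=\arc{p_t'p_i}$ (so $s_1+s_2+s_3+s_4=2\pi$ and, by property~1 of the Remark, $\mathrm{area}(Q)=\tfrac12\sum_i\sin s_i$), I would first establish that each $s_i<\pi$ (that is, $Q$ contains the centre $O$) and that the two reference diagonal arcs are major, $\arc{p_ip_j'}=s_1+s_2>\pi$ and $\arc{p_lp_t'}=s_2+s_3>\pi$; the latter is exactly what fixes the orientation of the four stated arcs and forces, in particular, $s_3+s_4<\pi$.

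By the antipodal symmetry of $S$ the four arcs split into the antipodal pairs $\arc{p_ip_j}\leftrightarrow\arc{p_i'p_j'}$ and $\arc{p_lp_t}\leftrightarrow\arc{p_l'p_t'}$, and the rotation exchanging the two wedges of the double‑wedge carries one pair to the other; hence it suffices to prove that $\arc{p_ip_j}$ contains no point of $S$ other than its endpoints. So I would suppose some $p_m\in S$ lies strictly inside $\arc{p_ip_j}$, at clockwise distance $\mu$ from $p_i$, where $0<\mu<s_1+s_2-\pi$ (the length of the arc, using $s_1+s_2>\pi$). I would then consider two candidate moves: (M1) replace the vertex $p_i$ by $p_m$, and (M2) replace the vertex $p_j'$ by the antipode $p_m'$.

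Each move keeps the quadrilateral asymmetric: in (M1) the new antipodal point $p_m'$ lies strictly inside $\arc{p_i'p_j'}$, which contains none of $p_l,p_j',p_t'$ because $s_1<\pi$; move (M2) is the antipodal mirror of this and uses $s_2<\pi$. For the areas, only the triangle cut off at the moved vertex changes, so by the side‑arc area formula the gain of (M1) is $\tfrac12\bigl[\sin(s_4+\mu)+\sin(s_1-\mu)-\sin s_4-\sin s_1\bigr]$; applying $\sin\alpha-\sin\beta=2\cos\frac{\alpha+\beta}{2}\sin\frac{\alpha-\beta}{2}$ together with the monotonicity of the cosine on $(0,\pi)$ — exactly the manipulation used in Lemmas~\ref{lemma_swap} and~\ref{thm:ordered_i_r_q} — this gain is positive precisely when $\mu<s_1-s_4$. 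The analogous computation for (M2) shows its gain is positive precisely when $\mu>s_1+s_3-\pi$.

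The decisive point is that these two conditions together cover every admissible $\mu$: since $s_3+s_4<\pi$ we have $s_1+s_3-\pi<s_1-s_4$, so the disjunction ``$\mu<s_1-s_4$ or $\mu>s_1+s_3-\pi$'' holds for all $\mu$, and in particular for our $\mu\in(0,\,s_1+s_2-\pi)$. Thus at least one of (M1), (M2) yields a strictly larger asymmetric quadrilateral, the desired contradiction; the remaining three arcs follow by the antipodal and rotational symmetries noted above. I expect the main obstacle to lie not in this final trigonometric step but in the preliminary structural claims: one must genuinely argue that the maximum‑area asymmetric quadrilateral contains the centre (so that every $s_i<\pi$, which is what makes the four arcs short corner gaps containing no vertex and what lets both exchanges preserve asymmetry) and that the labelling can be chosen so that the reference diagonal arcs are the major ones. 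The cleanest route to both facts is to read them off the double‑wedge structure of Lemma~\ref{lemma:Double wedge and two triangles}, after which the exchange argument above closes the proof.
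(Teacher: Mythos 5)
Your proof is correct, but it follows a genuinely different route from the paper's. The paper argues via each diagonal and the antipodal symmetry of $S$: for the diagonal $p_ip_j'$ of the optimal quadrilateral, the farthest point of $S$ from that chord on one side is the antipode of the farthest point on the other side, so asymmetry forces the quadrilateral to use the farthest point on one side and the \emph{second} farthest on the other; since the distance to a chord is unimodal along each arc, the farthest and second-farthest points (equivalently $p_l$ and $p_t$) are consecutive in $S$, and the same argument on the diagonal $p_lp_t'$ handles $p_i$ and $p_j$. You instead run an explicit local-exchange argument: parametrize by the side arcs $s_1,\dots,s_4$, suppose $p_m$ lies in the gap at distance $\mu$ from $p_i$, and show that either moving $p_i$ to $p_m$ (improves iff $\mu<s_1-s_4$) or moving $p_j'$ to $p_m'$ (improves iff $\mu>s_1+s_3-\pi$) strictly increases area while preserving asymmetry, the two conditions covering all admissible $\mu$ because $s_3+s_4<\pi$. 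Your computations check out (including the subtle point that when $s_3\geq s_2$ the second condition is unsatisfiable for admissible $\mu$, but then the first always holds). What the paper's argument buys is brevity and independence from Lemma~\ref{lemma:Double wedge and two triangles}, at the cost of leaving the unimodality/consecutiveness step and the orientation of the configuration implicit; what yours buys is a fully explicit verification of both the area gain and the asymmetry preservation, at the cost of importing the structural facts (center containment, hence $s_i<\pi$, and the major-arc labelling $s_1+s_2>\pi$, $s_2+s_3>\pi$) from the double-wedge lemma. Two small repairs: the fact $s_3+s_4<\pi$ is forced by $s_1+s_2>\pi$, not by $s_2+s_3>\pi$ as you wrote (the latter gives $s_1+s_4<\pi$); and there is in general no ``rotation exchanging the two wedges,'' so the arcs $\arc{p_lp_t}$ and $\arc{p_l'p_t'}$ are not obtained by symmetry---you must simply re-run your exchange argument on the other diagonal $p_lp_t'$, which works verbatim since $s_2+s_3>\pi$ plays there the role that $s_1+s_2>\pi$ plays for $\arc{p_ip_j}$.
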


\begin{proof}
Certainly $p_l$ is the farthest point of $S$ from ${p_ip_j'}$ in $\arc{p_ip_j'}$ or $p_t'$ is the farthest point of $S$ from ${p_ip_j'}$ in $\arc{p_j'p_i}$. Suppose that $p_l$ ($p_t'$) is the farthest point of $S$ from ${p_ip_j'}$ in $\arc{p_ip_j'}$ ($\arc{p_j'p_i}$), then $p_l'$ ($p_t$) is the farthest point of $S$ from ${p_ip_j'}$ in $\arc{p_j'p_i}$ ($\arc{p_ip_j'}$). In fact $p_t'$ ($p_l$) is the second farthest point of $S$ from ${p_ip_j'}$ in $\arc{p_j'p_i}$ ($\arc{p_ip_j'}$). Hence the points $p_l$ and $p_t$ ($p_l'$ and $p_t'$) are consecutive in clockwise. Analogously we can prove that $p_i$ and $p_j$ ($p_i'$ and $p_j'$) are consecutive .
\end{proof}

\begin{figure}[h]
\centering
\includegraphics[scale=1]{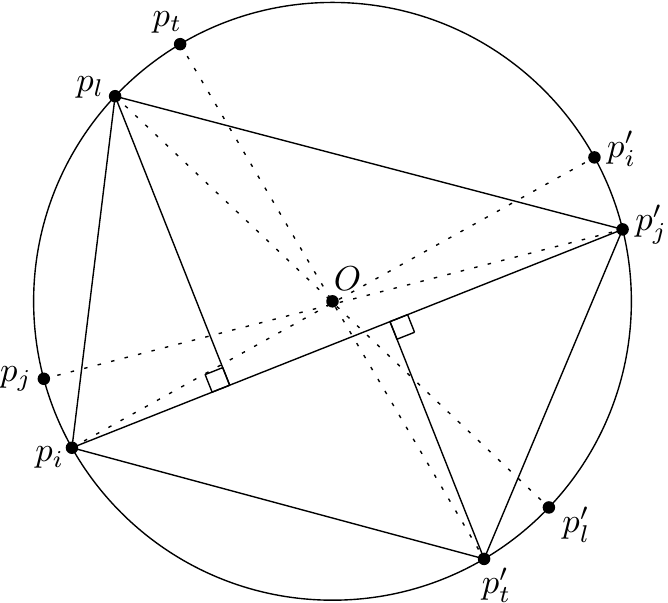}
\caption{Illustration of lemma~\ref{thm:k=4}}
\label{fig:k=4}
\end{figure}

Note that the four possible asymmetric quadrilateral with vertices in the subset $\{p_i$, $p_j$, $p_l$, $p_t$, $p_i'$, $p_j'$, $p_l'$, $p_t'\}$ have the same area because the segments ${p_ip_j'}$ and ${p_i'p_j}$ (${p_lp_t'}$ and ${p_l'p_t}$) are parallel and equal. As a consequence of Lemma~\ref{thm:k=4}, $p_j=p_{i+1}$ and $p_t=p_{l+1}$. From the above lemma we infer the following corollary.

\begin{corollary}
\label{cor:quad_diagonal}
Let $S:=\{p_0, p_0', p_1, p_1', \dots, p_{n-1}, p_{n-1}'\}$ be the set of endpoints of $n$ diameters in the unit circle, $n\geq 4$. Let $Q$ be a maximal asymmetric quadrilateral on $S$. If $p_i\in S$ is a vertex of $Q$ then its opposite vertex in $Q$ is $p_{i+1}'$ or $p_{i-1}'$.
\end{corollary}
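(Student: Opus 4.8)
The plan is to treat the corollary as a direct repackaging of Lemma~\ref{thm:k=4} together with the parallel-chord observation immediately preceding it, rather than as a fresh geometric argument. First I would invoke Lemma~\ref{thm:k=4}: a maximal asymmetric quadrilateral $Q$ can have its vertices labelled $p_i,p_l,p_j',p_t'$ so that the arcs $\arc{p_ip_j}$, $\arc{p_lp_t}$, $\arc{p_i'p_j'}$ and $\arc{p_l'p_t'}$ contain no other point of $S$. Emptiness of $\arc{p_ip_j}$ forces $p_j$ to be the point immediately clockwise after $p_i$, which is exactly the reduction $p_j=p_{i+1}$, $p_t=p_{l+1}$ already recorded after the lemma, so I would quote it directly.

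Next I would pin down the diagonal structure of $Q$. In the proof of Lemma~\ref{thm:k=4} the chord $p_ip_j'$ separates $p_l$ from $p_t'$ (they lie on the two different arcs $\arc{p_ip_j'}$ and $\arc{p_j'p_i}$), so $p_ip_j'$ is a \emph{diagonal} of $Q$; hence $p_i$ and $p_j'=p_{i+1}'$ are opposite vertices, and symmetrically $p_l$ and $p_t'=p_{l+1}'$ are opposite. Thus the two diagonals of $Q$ pair the $i$-th diameter with the $(i+1)$-th and the $l$-th diameter with the $(l+1)$-th.

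Then I would bring in the family of four equal-area quadrilaterals. Since $p_ip_j'$ and $p_i'p_j$ are parallel and equal, and likewise $p_lp_t'$ and $p_l'p_t$, each diagonal may independently be realised by either of its two parallel chords without changing the area. Consequently the opposite pair arising from the diameters $i,i+1$ is either $\{p_i,p_{i+1}'\}$ or $\{p_i',p_{i+1}\}$, and the pair arising from $l,l+1$ is either $\{p_l,p_{l+1}'\}$ or $\{p_l',p_{l+1}\}$. Reading off the unprimed member of each pair gives: $p_i$ is opposite $p_{i+1}'$; $p_{i+1}$ is opposite $p_i'=p_{(i+1)-1}'$; $p_l$ is opposite $p_{l+1}'$; and $p_{l+1}$ is opposite $p_l'=p_{(l+1)-1}'$. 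In every instance an unprimed vertex $p_a$ has its opposite vertex equal to $p_{a+1}'$ or $p_{a-1}'$, which is precisely the claim.

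I expect no new geometry to be needed beyond Lemma~\ref{thm:k=4}, so the only care required is bookkeeping: correctly identifying which pair of vertices forms a diagonal, and tracking how the two parallel-chord choices produce \emph{both} signs. The $+1$ case occurs when $p_a$ is the leading endpoint of its diameter pair, while the $-1$ case occurs exactly when $p_a$ is the trailing one ($p_a=p_{i+1}$ or $p_a=p_{l+1}$), whose diagonal partner is the antipode of the preceding point. Making this sign dichotomy explicit is the main (and only genuine) obstacle in writing the argument cleanly.
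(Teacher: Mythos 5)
Your proposal is correct and takes essentially the same route as the paper: the paper offers no separate proof of the corollary, presenting it as an immediate inference from Lemma~\ref{thm:k=4} together with the preceding remark that $p_j=p_{i+1}$, $p_t=p_{l+1}$ and that the four parallel-chord variants ($p_ip_j'$ versus $p_i'p_j$, and $p_lp_t'$ versus $p_l'p_t$) bound quadrilaterals of equal area. Your reconstruction---identifying $p_ip_{i+1}'$ and $p_lp_{l+1}'$ as the diagonals and tracking the $\pm 1$ sign via the leading/trailing endpoint dichotomy---is precisely the bookkeeping the paper leaves implicit.
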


\begin{lemma}
\label{thm:lastk=4}
Let $a$, $b$, $c$, $d$, $a'$, $e$, $f$ and $d'$ be clockwise points in a circle, where ${aa'}$ and ${dd'}$ are diameters. If $d$ is farther than $c$ from ${ae}$, then $d$ is farther than $c$ from ${bf}$.
\end{lemma}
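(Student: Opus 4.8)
The plan is to convert both ``farther-than'' statements into a single comparison of angles, using an explicit formula for the distance from a point of the circle to a chord. First I would assign to each labelled point its angular coordinate on the unit circle, chosen so that the clockwise order becomes the strictly decreasing order $\theta_a>\theta_b>\theta_c>\theta_d>\theta_{a'}>\theta_e>\theta_f>\theta_{d'}$, with $\theta_{a'}=\theta_a-\pi$ and $\theta_{d'}=\theta_d-\pi$. A short computation (expanding the planar cross product $(Y-X)\times(P-X)$ and dividing by the chord length $2|\sin\frac{\theta_X-\theta_Y}{2}|$) gives the clean identity that the distance from the point at angle $\theta$ to the line through the points at angles $\theta_X,\theta_Y$ equals
\[
\Big|\cos\!\Big(\theta-\tfrac{\theta_X+\theta_Y}{2}\Big)-\cos\tfrac{\theta_X-\theta_Y}{2}\Big|.
\]
I would verify this on a trivial case (a diameter as the chord) before using it.

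Next I would specialize this to the two chords. For $ae$ set $m_1=\frac{\theta_a+\theta_e}{2}$ (the mid-arc direction) and $h_1=\frac{\theta_a-\theta_e}{2}$. Because $\theta_c,\theta_d$ lie strictly between $\theta_e$ and $\theta_a$ (the position of $a'$ guarantees $\theta_e<\theta_d$), both $c$ and $d$ lie on the same arc cut off by $ae$, so $|\theta_c-m_1|,|\theta_d-m_1|<h_1<\pi$ and the quantity inside the absolute value is positive for each. Hence the hypothesis becomes $\cos(\theta_d-m_1)>\cos(\theta_c-m_1)$, and since $\cos$ is even and strictly decreasing on $(0,\pi)$ this is equivalent to $|\theta_d-m_1|<|\theta_c-m_1|$; as $\theta_d<\theta_c$, squaring reduces it to $m_1<\frac{\theta_c+\theta_d}{2}$. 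The identical argument applied to $bf$, with $m_2=\frac{\theta_b+\theta_f}{2}$ and using $\theta_c,\theta_d\in(\theta_f,\theta_b)$, shows that the desired conclusion is equivalent to $m_2<\frac{\theta_c+\theta_d}{2}$.

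Finally I would close the argument by comparing the two mid-directions. Since $b$ follows $a$ and $f$ follows $e$ clockwise, $\theta_b<\theta_a$ and $\theta_f<\theta_e$, whence $m_2<m_1$. Therefore the hypothesis $m_1<\frac{\theta_c+\theta_d}{2}$ immediately forces $m_2<m_1<\frac{\theta_c+\theta_d}{2}$, which is exactly the conclusion.

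The main obstacle is not the final inequality, which is a one-liner once everything is set up, but the bookkeeping that makes the reduction legitimate: one must check that $c$ and $d$ genuinely lie on the same side of each chord (so unsigned distances may be compared through one signed expression) and that every angular argument stays inside $(-\pi,\pi)$, where the monotonicity of $\cos$ applies. It is worth noting that the diameter hypotheses $aa'$ and $dd'$ enter only through the resulting cyclic positions of $a'$ and $d'$; the inequality itself depends solely on the cyclic order of the eight points. A purely synthetic alternative, comparing triangle heights via similar triangles in the spirit of Lemma~\ref{thm:anchored}, is also available, but the analytic reduction above is shorter and makes the monotonicity transparent.
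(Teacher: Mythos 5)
Your proof is correct, and it takes a genuinely different route from the paper's. The paper argues synthetically: writing $n$ for the intersection of $ad$ with $ce$ and $m$ for the intersection of $bd$ with $cf$, it converts the hypothesis into the area inequality $A_{ace}<A_{ade}$, strips off the common triangle $ane$, and uses the similar triangles $anc$ and $dne$ to get the chord inequality $ac<de$; it then invokes the diameters --- $b,c$ lie in the semicircle determined by $aa'$ and $e,f$ in the one determined by $dd'$, where chord length is monotone in arc length --- to upgrade this to $bc<df$, and finally runs the same mechanism backwards (similar triangles $bmc$ and $dmf$, plus the common triangle $bmf$) to land on $A_{bcf}<A_{bdf}$, i.e.\ the conclusion. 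Your argument instead reduces each ``farther'' statement to a comparison of the chord's mid-arc direction with $\frac{\theta_c+\theta_d}{2}$; I checked the distance identity, the positivity of the bracketed term (both $c$ and $d$ lie on the arc cut off by each of the two chords, and all angles involved stay below $\pi$ because the eight angles span less than $2\pi$), and the squaring step that turns $|\theta_d-m_i|<|\theta_c-m_i|$ into $m_i<\frac{\theta_c+\theta_d}{2}$, and they are all sound, so the implication collapses to the one-line monotonicity $m_2<m_1$. What your route buys: it is shorter, it matches the trigonometric style the paper already uses in Lemma~\ref{thm:ordered_i_r_q}, and it proves a strictly stronger statement --- as you observe, the diameter hypotheses are never used, so the lemma holds for any six points $a,b,c,d,e,f$ in clockwise order. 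The paper's proof, by contrast, genuinely needs the diameters (its step ``$bc<ac$ and $de<df$ as they are in the same semicircle'' can fail if the relevant arcs exceed a semicircle), but in exchange it is coordinate-free and purely classical.
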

\begin{proof}
Let $m$ be the intersection point of ${bd}$ and ${cf}$, let $n$ be the intersection point of ${ad}$ and ${ce}$. Because $d$ is farther than $c$ from ${ae}$, $A_{ace}<A_{ade}$. Because triangle $ane$ is common for the triangles $ace$ and $ade$, $A_{anc}<A_{dne}$. 
Since the triangles $anc$ and $dne$ are similar, ${ac}<{de}$. Moreover, ${bc}<{ac}$ and ${de}<{df}$ as they are in the same semicircle, respectively. From these inequalities, we infer that ${bc}<{df}$.  
Because the triangles $bmc$ and $dmf$ are similar, we know that $A_{bmc}<A_{dmf}$. Since triangle $bmf$ is common for the triangles $bcf$ and $bdf$, $A_{bcf}<A_{bdf}$. Therefore, $d$ is farther than $c$ from ${bf}$; see Figure~\ref{fig:c_d}.
\end{proof}

\begin{figure}[h]
\centering
\includegraphics[scale=1, page=4]{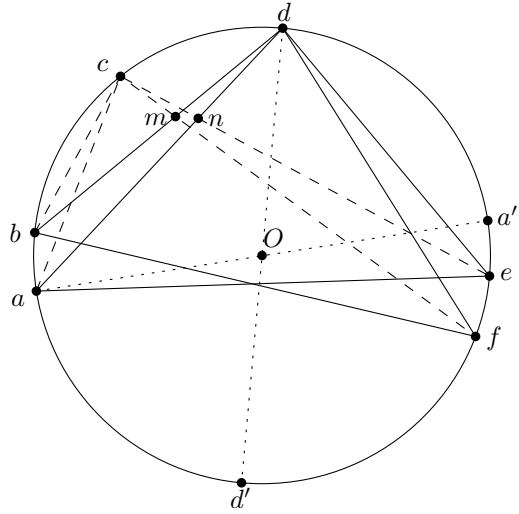}
\caption{Representation of lemma~\ref{thm:lastk=4}}
\label{fig:c_d}
\end{figure}

We are now ready to show the main result of this section.

\begin{theorem}
Let $S:=\{p_0, p_0', p_1, p_1', \dots, p_{n-1}, p_{n-1'}\}$ be the set of endpoints of $n$ diameters in the unit circle, $n\geq 4$. The asymmetric quadrilateral of maximum area can be found in linear time.
\end{theorem}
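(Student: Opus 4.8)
The plan is to leverage the structural characterization already established for maximal asymmetric quadrilaterals and turn it into a linear-time sweep. By Corollary~\ref{cor:quad_diagonal}, if $p_i$ is a vertex of a maximal asymmetric quadrilateral $Q$, then the vertex of $Q$ opposite to $p_i$ is either $p_{i+1}'$ or $p_{i-1}'$. This drastically restricts the search space: rather than considering all $\binom{2n}{4}$ quadruples, for each index $i$ we need only examine quadrilaterals in which one diagonal joins $p_i$ to $p_{i+1}'$ (the case $p_{i-1}'$ being symmetric and subsumed as $i$ ranges over all indices). Fixing such a diagonal, the quadrilateral is determined by choosing the two remaining vertices, one on each side of the chord $p_ip_{i+1}'$, so as to maximize the combined area of the two triangles hanging off this diagonal.

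First I would fix the diagonal $p_ip_{i+1}'$ and split the problem: the area of $Q$ is the sum of the areas of two triangles sharing the base $p_ip_{i+1}'$, one with apex in the clockwise arc $\arc{p_ip_{i+1}'}$ and one with apex in the complementary arc. Each apex should be the point of $S$ farthest from the line through $p_i$ and $p_{i+1}'$ in its respective arc, subject to the asymmetry constraint. Here Lemma~\ref{thm:k=4} guarantees that the optimal apices are essentially forced to be consecutive points near the extremes of each arc, and the four possible asymmetric quadrilaterals on the resulting eight-point configuration all have equal area, so there is no ambiguity in the objective value. The key monotonicity input is Lemma~\ref{thm:lastk=4}: it shows that as the diagonal rotates clockwise (i.e.\ as $i$ increases by one), the farthest point in each arc can only advance clockwise and never regress. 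Concretely, if $p_c$ is farther than a competitor from the base $p_ip_{i+1}'$, the same ordering is preserved when we pass to the base $p_{i+1}p_{i+2}'$.

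The algorithm therefore maintains, for the current diagonal, pointers to the farthest apex on each side, and as $i$ advances these pointers only move forward around the circle. Because each pointer traverses the circle at most once over the entire sweep, the total work is $O(n)$ amortized, giving the claimed linear time bound once we verify that evaluating and comparing a constant number of candidate triangles per step is $O(1)$. I would conclude by taking the maximum over all $n$ choices of the diagonal anchor $p_i$, which by Corollary~\ref{cor:quad_diagonal} is guaranteed to include the optimum.

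The main obstacle I anticipate is the bookkeeping needed to justify the amortized-linear pointer advancement rigorously: one must argue that the monotonicity of Lemma~\ref{thm:lastk=4} applies uniformly as the diagonal sweeps all the way around the circle, including the wrap-around, and that the asymmetry constraint (excluding a point when it would complete a diameter with an already-chosen vertex) never forces a pointer to back up. Handling the constraint cleanly — showing that forbidding the at most one ``antipodal'' candidate in each arc costs only $O(1)$ extra comparisons and does not disturb monotonicity — is the delicate part; the underlying geometric comparisons are otherwise routine consequences of the preceding lemmas.
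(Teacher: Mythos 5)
Your proposal is correct and takes essentially the same approach as the paper's own proof: both reduce the search to quadrilaterals with diagonal $p_ip_{i+1}'$ via Corollary~\ref{cor:quad_diagonal}, and both use the monotonicity of Lemma~\ref{thm:lastk=4} to advance farthest-point pointers only clockwise as the diagonal rotates, yielding an amortized linear sweep. If anything, your discussion of the wrap-around and the asymmetry bookkeeping is more explicit than the paper's.
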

\begin{proof}
Let $Q$ be the asymmetric quadrilateral of maximum area on $S$ and let $Q_i=p_ip_lp_{i+1}'p_{l+1}'$ be a maximal asymmetric quadrilateral with ${p_ip_{i+1}'}$ as a diagonal, using Corollary~\ref{cor:quad_diagonal} we have that $Q=\max\{Q_i\}$. 
The idea of our algorithm is to iterate among the endpoints of the $n$ diameters in clockwise order and compute, for each $p_i$, the maximal asymmetric quadrilateral $Q_i$ with $p_ip_{i+1}'$ as a diagonal. Suppose that for an arbitrary point $p_i\in S$ we have computed $Q_i=p_ip_lp_{i+1}'p_{l+1}'$ where $p_l$ and $p_{l+1}'$ are the farthest points (the farthest and the second farthest maintaining the asymmetry) from $p_ip_{i+1}'$ on $S$. To compute $Q_{i+1}$ we need to find $p_m$ such that $p_m$ and $p_{m+1}'$ are the farthest points from $p_{i+1}p_{i+2}'$ on $S$, using Lemma~\ref{thm:lastk=4} we can infer that $p_m$ and $p_{m+1}'$ lie on $\arc{p_lp_{i+2}'}$ and $\arc{p_{l+1}'p_{i+1}}$, respectively (see Figure~\ref{fig:i_l_m}). Therefore, making a clockwise iteration on $S$ we can find all the maximal asymmetric quadrilaterals in linear time, computing for each $p_i$ the farthest points (the farthest and the second farthest maintaining the asymmetry) $p_l$ and $p_{l+1}$ from $p_ip_{i+1}'$ on $S$ using the farthest points found in the previous step of the iteration.
\end{proof}

\begin{figure}[h]
\centering
\includegraphics[scale=1, page=3]{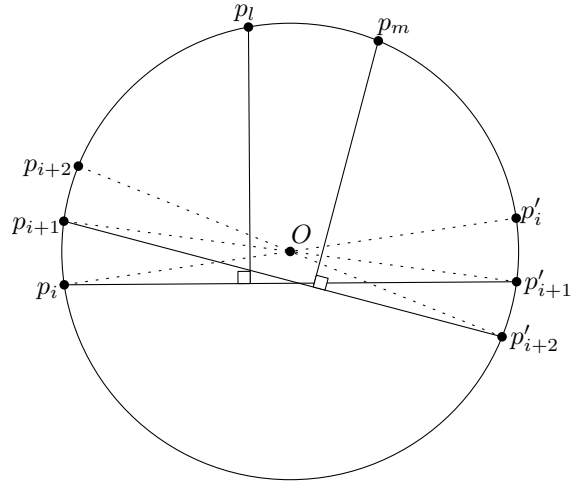}
\caption{$p_l$ is the farthest point of $S$ from ${p_ip_{i+1}'}$ and $p_m$ is the farthest point of $S$ from ${p_{i+1}p_{i+2}'}$.}
\label{fig:i_l_m}
\end{figure}

\section{Acknowledgments}

The problems studied here were introduced and partially solved during a visit
to University of La Havana, Cuba in November 2013. We thank the project COFLA: Computational analysis of the Flamenco music (FEDER P09-TIC-4840) for posing us the basic problem studied in this paper.

\bibliographystyle{plain}
\bibliography{antipodalbib}

\end{document}